\theoremstyle{definition}
\newtheorem{nul}{}[section]
\newtheorem{dfn}[nul]{Definition}
\newtheorem{rmk}[nul]{Remark}
\newtheorem{cnstr}[nul]{Construction}
\newtheorem{cnv}[nul]{Convention}
\newtheorem{exm}[nul]{Example}
\newtheorem{qst}{Question}
\newtheorem*{dfn*}{Definition}
\newtheorem*{axm*}{Axiom}
\newtheorem*{ntn*}{Notation}
\newtheorem*{exm*}{Example}
\newtheorem*{exr*}{Exercise}
\newtheorem*{int*}{Intuition}
\newtheorem*{qst*}{Question}
\newtheorem*{rmk*}{Remark}
\theoremstyle{plain}
\newtheorem{claim}[nul]{Claim}
\newtheorem{thm}[nul]{Theorem}
\newtheorem{prop}[nul]{Proposition}
\newtheorem{lem}[nul]{Lemma}
\newtheorem{cor}{Corollary}[nul]
\newtheorem*{thm*}{Theorem}
\newtheorem*{prop*}{Proposition}
\newtheorem*{cor*}{Corollary}
\newtheorem*{lem*}{Lemma}
\newtheorem*{cnj*}{Conjecture}
\DeclareMathOperator{\Hom}{\text{Hom}}
\DeclareMathOperator{\smsh}{\wedge}
\DeclareMathOperator{\Z}{\mathbb{Z}}
\DeclareMathOperator{\E}{\mathbb{E}}
\DeclareMathOperator{\pic}{\mathrm{pic}}
\DeclareMathOperator{\Pic}{\mathrm{Pic}}
\begin{document}

\title{Exotic Multiplications on Periodic Complex Bordism}
\author{Jeremy Hahn}
\address{Department of Mathematics, Massachusetts Institute of Technology, Cambridge, MA 02139}
\email{jhahn01@mit.edu}

\author{Allen Yuan}
\address{Department of Mathematics, Massachusetts Institute of Technology, Cambridge, MA 02139}
\email{alleny@mit.edu}

\begin{abstract}
Victor Snaith gave a construction of periodic complex bordism by inverting the Bott element in the suspension spectrum of $BU$.  This presents an $\mathbb{E}_\infty$ structure on periodic complex bordism by different means than the usual Thom spectrum definition of the $\mathbb{E}_\infty$-ring $MUP$.  Here, we prove that these two $\mathbb{E}_\infty$-rings are in fact different, though the underlying $\mathbb{E}_2$-rings are equivalent.  Nonetheless, we prove that both rings $\mathbb{E}_\infty$-orient $KU_2^{\wedge}$ and other forms of $K$-theory.
\end{abstract}


\setcounter{tocdepth}{1}
\maketitle

\tableofcontents

\vbadness 5000


\section{Introduction}

The complex bordism spectrum $MU$ occupies a fundamental place in modern homotopy theory.  
For instance, the field of chromatic homotopy theory is centered around an observation of Quillen which connects homotopy commutative ring maps out of $MU$ to formal group laws; accordingly, one is led to study \emph{complex oriented} cohomology theories, or ring spectra $E$ equipped with a map $MU \longrightarrow E$ of homotopy rings.  However, $MU$ is not just a homotopy commutative ring spectrum, but an $\E_{\infty}$-ring spectrum.  One can therefore ask:

\begin{qst}\label{qst:1}
Which $\mathbb{E}_\infty$-ring spectra $E$ receive $\mathbb{E}_\infty$-ring homomorphisms $MU \longrightarrow E$?
\end{qst}

This question has received significant attention in recent years.  The case of $K$-theory was studied in \cite{Walker, Moll,HopLaw}.  Ando \cite{Ando} shows that such a structured orientation on $E$ imposes a strong condition on the formal group law of $E$; work of Zhu \cite{Zhu} verifies this condition when $E$ is a Morava $E$-theory.  Hopkins and Lawson \cite{HopLaw} describe a series of obstructions, starting with Ando's condition, to producing $\E_{\infty}$-orientations.  On the other hand, Johnson and Noel show that such an orientation often implies that the formal group law on $\pi_*(E)$ is not in the image of the Quillen idempotent \cite{JohnsonNoel}.

In this paper, we study the variant of Question \ref{qst:1} for a variant of $MU$ known as the \emph{periodic} complex bordism spectrum $MUP$, which is the Thom spectrum of the tautological bundle over $BU\times \mathbb{Z}$:

\begin{qst}\label{qst:2}
Which $\mathbb{E}_\infty$-ring spectra $E$ receive $\mathbb{E}_\infty$-ring homomorphisms $MUP \longrightarrow E$?
\end{qst}

\begin{rmk}
Structured orientations by $MUP$ appear briefly in \cite{HopLaw}.  
Such orientations are also closely related to the theory of $H_{\infty}^2$ structures, as studied in \cite{BMMS} (see Remark \ref{rmk:hinfty2}).  
\end{rmk}

To make sense of Question \ref{qst:2}, one must be very clear about how the $\mathbb{E}_\infty$-ring structure on $MUP$ is defined.  As the authors understand it, the usual definition of this structure, due to \cite[Remark IX.7.2]{LMS} (see also \cite[Remark 2]{HopLaw}), runs as follows:

Consider the unstable $J$-homomorphism, which is the symmetric monoidal functor
$$J:\coprod_{n} BU(n) \simeq \{ \text{Complex Vector Spaces}\}^{\simeq} \longrightarrow \text{Spaces}$$
that sends a $d$-dimensional complex vector space $V$ to its one-point compactification $S^V$, a $2d$-dimensional sphere.  After further composing with the suspension spectrum functor $$\Sigma^{\infty}:\text{Spaces} \longrightarrow \text{Spectra},$$ the $J$-homomorphism takes values in the subgroupoid of invertible spectra and their automorphisms, known as the Picard category of Spectra.  This implies that $J$ factors through the group completion of the category of complex vector spaces, giving a stable $J$ functor 
$$J:BU \times \mathbb{Z} \simeq \{ \text{Virtual Complex Vector Spaces} \}^{\simeq} \longrightarrow \text{Pic}(\text{Spectra}) \subset \text{Spectra}.$$
Since this stable $J$ functor is symmetric monoidal, its homotopy colimit, $MUP,$ acquires an $\mathbb{E}_\infty$-ring structure \cite{LMS, OmarToby, ABGHRinfinity}.

\begin{cnv}
Throughout this paper, whenever we refer to $MUP$ as an $\mathbb{E}_\infty$-ring spectrum, we always give it the $\mathbb{E}_\infty$-ring structure constructed above.
\end{cnv}

In contrast, consider the following alternate construction of the periodic complex bordism spectrum by Snaith \cite{SnaithOriginal}:

Since $BU \simeq \Omega^{\infty} \Sigma^2 ku$ is an infinite loop space, $\Sigma^{\infty}_+ BU$ is an $\mathbb{E}_\infty$-ring spectrum.  Given an element in the homotopy of an $\mathbb{E}_\infty$-ring spectrum, we may invert that element to obtain another $\mathbb{E}_\infty$-ring.  In particular, we may invert the suspension of the Bott element $\beta:S^2 \longrightarrow BU$, considered as an element in stable homotopy $\beta \in \pi_2(\Sigma^{\infty}_+ BU)$.

\begin{thm}[Snaith \cite{SnaithOriginal}]
There is an equivalence of homotopy commutative ring spectra
$$\Sigma^{\infty}_+ BU[\beta^{-1}] \simeq MUP.$$
\end{thm}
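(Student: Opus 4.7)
The plan is to construct a homotopy commutative ring map $\phi : \Sigma^{\infty}_+ BU \to MUP$ sending $\beta \in \pi_2(\Sigma^{\infty}_+ BU)$ to a unit, then extend via the universal property of localization to $\tilde{\phi} : \Sigma^{\infty}_+ BU[\beta^{-1}] \to MUP$, and finally verify $\tilde{\phi}$ is an equivalence by a mod-$p$ and rational homology calculation.

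For the construction of $\phi$, I would use the complex orientation of $MUP$. Since $MUP$ is a $2$-periodic complex oriented ring spectrum, its orientation yields a pointed map $\mathbb{CP}^\infty \to \Omega^\infty MUP$ (using $2$-periodicity to identify $\Sigma^2 MUP$ with $MUP$). This extends to an $H$-space map $BU \to \Omega^\infty MUP$ --- where $BU$ carries its additive $H$-space structure under Whitney sum and $\Omega^\infty MUP$ carries the multiplicative $H$-space structure from the ring multiplication on $MUP$ --- by sending a virtual sum of line bundles to the corresponding product of orientation classes, in the spirit of the splitting principle. Adjointing yields $\phi$; by construction, $\beta : S^2 \cong \mathbb{CP}^1 \hookrightarrow \mathbb{CP}^\infty \hookrightarrow BU$ maps to the Bott periodicity element in $\pi_2 MUP$, which is a unit.

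To prove $\tilde{\phi}$ is an equivalence, I would compute the induced map on $\mathbb{F}_p$-homology for every prime $p$ (and on rational homology). We have $H_*(BU; \mathbb{F}_p) \cong \mathbb{F}_p[b_1, b_2, \ldots]$ with $|b_i|=2i$ and $b_1$ the Hurewicz image of $\beta$; inverting $\beta$ gives $\mathbb{F}_p[b_1^{\pm 1}, b_2, \ldots]$. On the other side, $H_*(MUP; \mathbb{F}_p) \cong \mathbb{F}_p[m_1, m_2, \ldots][u^{\pm 1}]$ with $|u|=2$ and $|m_i|=2i$. Under $\tilde{\phi}_*$, $b_1$ maps to $u$ up to a unit, and $b_i \mapsto u^i m_i$ modulo decomposables --- essentially the classical Thom isomorphism $H_*(BU) \cong H_*(MU)$ combined with $2$-periodicity. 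A standard filtration argument then shows $\tilde{\phi}_*$ is an isomorphism of graded rings, yielding the equivalence. The main obstacle is the bookkeeping in this homology comparison, confirming the precise effect of $\tilde{\phi}_*$ on generators; a secondary subtlety is verifying that the extension from $\mathbb{CP}^\infty$ to $BU$ genuinely produces a homotopy commutative ring map, which follows from the splitting-principle construction being compatible with both $H$-space structures.
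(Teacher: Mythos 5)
The paper does not prove this theorem; it is stated as a black-box result of Snaith with a citation to \cite{SnaithOriginal}, so there is no in-paper argument to compare against. Your overall strategy --- construct a homotopy ring map $\Sigma^\infty_+ BU \to MUP$, invert $\beta$ by the universal property of localization, and check that the result is an equivalence by a homology computation --- is indeed the standard route and matches what Snaith (and modern treatments such as Gepner--Snaith) do. That said, there are two gaps worth flagging.

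First, the construction of the map is off in a way that matters. The periodic orientation class gives a \emph{reduced} class $ut \in \widetilde{MUP}^0(\mathbb{CP}^\infty)$, i.e.\ a pointed map $\mathbb{CP}^\infty \to \Omega^\infty MUP$ landing on the additive basepoint $0$, not the multiplicative unit $1$. Such a map cannot be extended to an $H$-map from $(BU,\oplus)$ into $(\Omega^\infty MUP, \times)$ by ``taking products of orientation classes'': the trivial line would have to go to $u\cdot e(\underline{\mathbb{C}}) = 0$, so the resulting spectrum map $\Sigma^\infty_+ BU \to MUP$ would kill the unit. What you actually need is the total Conner--Floyd class, which on a line bundle $L$ is $1 + u\,e(L)$, extended by the Whitney-sum formula $c(V\oplus W) = c(V)c(W)$ to a map $BU \to GL_1(MUP)$. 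This is exactly the map whose components are the $c_i$'s, and the paper records (in Section 2) that Snaith's equivalence is normalized so that the induced map to $H\mathbb{Z}P$ is the classical total Chern class $1 + c_1 u + c_2 u^2 + \cdots$. With this correction, $\beta\mapsto u$ as you want.

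Second, the final step --- concluding that a map inducing an isomorphism on $\mathbb{F}_p$-homology for all $p$ and on rational homology is an equivalence --- is not automatic here, because neither $\Sigma^\infty_+ BU[\beta^{-1}]$ nor $MUP$ is bounded below, and the Whitehead theorem for ordinary homology fails for non-connective spectra (there exist nonzero $H\mathbb{Z}$-acyclic spectra). The cleanest fix is to upgrade the homology computation to $MU$-homology: one checks that $MU_*(\Sigma^\infty_+BU[\beta^{-1}]) \cong MU_*(BU)[b_1^{-1}] \to MU_*(MUP)$ is an isomorphism, so $MU\wedge f$ is an equivalence and the cofiber $C$ of $f$ has $MU\wedge C \simeq 0$. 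One then observes that $\Sigma^\infty_+ BU[\beta^{-1}]$ is itself complex orientable (the class $\beta^{-1}\cdot[\mathbb{CP}^\infty\to BU]$ is a periodic orientation), so $C$ carries a homotopy $MU$-module structure, is therefore a retract of $MU\wedge C$, and hence vanishes. Without some such device, the leap from a homology isomorphism to an equivalence of spectra is a genuine gap.
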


Our aim, when beginning this project, was to upgrade Snaith's theorem to an equivalence of $\mathbb{E}_\infty$-ring spectra.  We were thus surprised to obtain the following result, which is the main theorem of this paper:

\begin{thm} \label{thm:main}
There is an equivalence of $\mathbb{E}_2$-ring spectra
$$\Sigma^{\infty}_+ BU[\beta^{-1}] \simeq MUP,$$
but there is \textbf{NOT} an equivalence of $\mathbb{E}_5$-ring spectra.  In particular, there is no equivalence of $\mathbb{E}_\infty$-rings.
\end{thm}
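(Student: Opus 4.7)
My approach would view both $\Sigma^{\infty}_+ BU[\beta^{-1}]$ and $MUP$ as Thom spectra of $\mathbb{E}_\infty$-maps $BU \to \Pic(\text{Spectra})$: the former corresponds to the trivial (constant) $\mathbb{E}_\infty$-map, with $\beta$ formally inverted, while the latter corresponds to the $J$-homomorphism. From this perspective the theorem reduces to a statement about the extent to which $J$ becomes $\mathbb{E}_n$-null after inverting Bott: it does so at $\mathbb{E}_2$, but obstructions appear by $\mathbb{E}_5$. Both halves of the theorem are then obstruction-theoretic statements about a single map in a single mapping space.

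For the positive $\mathbb{E}_2$-equivalence, I would exhibit an $\mathbb{E}_2$-nullhomotopy of $J[\beta^{-1}]$ via a shearing argument. The key input is that the multiplicative structure on $BU$ together with an $\mathbb{E}_2$-coherent braiding can be used to absorb the twist from $J$ into an $\mathbb{E}_1$-self-equivalence of the target — this is the $\mathbb{E}_2$-enhancement of the classical shear equivalence $R[G] \otimes R[G] \simeq R[G\times G]$ for a grouplike $\mathbb{E}_1$-space. The shearing trivialization produces an $\mathbb{E}_2$-map $\Sigma^{\infty}_+ BU[\beta^{-1}] \to MUP$, which Snaith's theorem identifies as an underlying homotopy equivalence; since $\mathbb{E}_2$-maps between $\mathbb{E}_2$-rings that are underlying equivalences are $\mathbb{E}_2$-equivalences, we conclude.

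For the negative direction at $\mathbb{E}_5$, I would look for an obstruction class in an operadic cohomology group measuring the difference between the two candidate $\mathbb{E}_5$-structures. The natural receptacle is the $\mathbb{E}_n$-Andr\'e--Quillen cohomology of $MUP$, organized in a Postnikov-style tower whose layers capture the $\mathbb{E}_n$ versus $\mathbb{E}_{n-1}$ information. The idea is to compare the $\mathbb{E}_5$-nullhomotopies of $J[\beta^{-1}]$ and to detect a specific obstruction that is killed by the $\mathbb{E}_2$-structure but not by the $\mathbb{E}_5$-structure; in practice this class would be detected after $p$-completion at a small prime, via Dyer--Lashof power operations or by localization at a chromatically simple target where the two structures induce unequal operations.

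The main obstacle is the non-vanishing step. It is not enough to exhibit a formal difference between the two structures: one must pin down a nonzero class that is sensitive to $\mathbb{E}_5$-structure but insensitive to $\mathbb{E}_4$-structure, and propagate this all the way to a genuine inequivalence of $\mathbb{E}_5$-rings. I would expect this to require (i) reducing the problem to a computation in a $p$-completed or $K(n)$-local setting where the Picard and power-operation structure is explicit, (ii) an explicit Dyer--Lashof computation pinpointing a class that survives the $\mathbb{E}_5/\mathbb{E}_4$ layer, and (iii) verifying non-triviality by comparing the induced operations on the homotopy of a chromatically simple target such as a Morava $E$-theory. This computational step is the delicate heart of the argument.
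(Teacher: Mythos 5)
Your overall framing is sound and aligned with the paper's philosophy: both spectra are Thom spectra over $BU$, and the question is a comparison of two multiplicative trivializations. However, both halves of your argument have genuine gaps. For the $\mathbb{E}_2$-equivalence, the shearing argument you sketch does not work as stated: the $J$-homomorphism $J : BU \to \Pic(\mathbb{S})$ is certainly not $\mathbb{E}_2$-nullhomotopic (if it were, $MU$ would be $\Sigma^\infty_+ BU$ as an $\mathbb{E}_2$-ring, which is false), and "inverting Bott" does not act on the target $\Pic(\mathbb{S})$ in any useful way, so there is no clean sense in which $J[\beta^{-1}]$ becomes trivial. The paper instead takes an arbitrary homotopy-commutative ring map $\Sigma^{\infty}_+ BU[\beta^{-1}] \to MUP$ (e.g., Snaith's) and promotes it to an $\mathbb{E}_2$-map using a nontrivial lifting theorem for Thom spectra through $2$-fold loop maps, plus a result that inverting a homotopy element commutes with this promotion. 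That lifting theorem is the essential input and cannot be replaced by a shearing isomorphism.

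For the negative direction, your instincts are right — one should map to a chromatically simple target and use power operations at $p = 2$ — but you leave the decisive computational step entirely open, and the specific choice of target is itself a nontrivial idea you do not supply. The paper constructs a particular $\mathbb{E}_\infty$-form of periodic integral homology, $H\mathbb{Z}P$, equipped by construction with an $\mathbb{E}_\infty$-map from $MUP$, and then shows that no $\mathbb{E}_5$-ring map $\Sigma^{\infty}_+ BU[\beta^{-1}] \to H\mathbb{Z}P$ exists. The key point is that any such map restricted to $\Sigma^\infty_+ BU$ gives a "total Chern class" $r(t) = 1 + a_1 u t + a_2 u^2 t^2 + \cdots$ with $a_1 = \pm 1$, and $\mathbb{E}_5$-structure forces the $C_2$-extended power of $r(t)$ to commute with the multiplication after restricting to $\mathbb{R}P^4 \subset \mathbb{R}P^\infty$, i.e., modulo $(uz)^3$. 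Computing both sides — one via the total Chern class of $(\mathcal{L}-1)\otimes\rho$, where $\rho$ is the regular representation of $C_2$, and the other via the total power operation $P^2_{H\mathbb{Z}P}$ — yields two incompatible mod-$2$ relations among $a_1, a_2, a_3$ (namely $a_2 + a_3 \equiv 1$ and $a_2 + a_3 \equiv 0$). This concrete contradiction, and the insight that working modulo $(uz)^3$ is exactly what isolates the $\mathbb{E}_5$ (rather than $\mathbb{E}_4$) obstruction, is the delicate heart you identified as missing; without it you have a plan, not a proof.
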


\begin{rmk}
In order to prove Theorem \ref{thm:main}, we will construct a certain $\mathbb{E}_\infty$-ring structure on $H\mathbb{Z}P$, by which we denote periodic integral homology.  By construction, there will be an $\mathbb{E}_\infty$-ring homomorphism $MUP \longrightarrow H\mathbb{Z}P$, but we will show that there can be no $\mathbb{E}_5$-ring homomorphism $\Sigma^{\infty}_+ BU[\beta^{-1}] \longrightarrow H\mathbb{Z}P$.  As we will explain, one interpretation of our result is that `no choice of total Chern class can be a five-fold loop map.'  This is related to another result of Snaith \cite{SnaithNotMultiplicative}, which states roughly that `the usual choice of total Chern class is not an infinite loop map.'
\end{rmk}

Despite the plethora of work that has gone into understanding the standard multiplication on $MUP$ \cite{Ando,HopLaw}, there are some reasons to prefer the `exotic' multiplication that is $\Sigma^{\infty}_+ BU[\beta^{-1}]$.  Perhaps the main reason is the latter's clear connection to topological complex $K$-theory, as mediated through another theorem of Snaith:

\begin{thm}[Snaith \cite{SnaithOriginal}]
There is an equivalence of homotopy commutative ring spectra 
$$\Sigma^{\infty}_+ \mathbb{CP}^{\infty}[\beta^{-1}] \simeq KU.$$
\end{thm}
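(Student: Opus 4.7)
The plan is to construct a canonical homotopy commutative ring map $\phi: \Sigma^{\infty}_+ \mathbb{CP}^{\infty}[\beta^{-1}] \to KU$ and then show it is an equivalence. The construction mirrors the discussion of $MUP$ given above, with complex vector spaces replaced by complex lines.

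For the construction, note that $\mathbb{CP}^{\infty} \simeq BU(1)$ is an $\mathbb{E}_\infty$-space under tensor product of line bundles, so $\Sigma^{\infty}_+ \mathbb{CP}^{\infty}$ is naturally an $\mathbb{E}_\infty$-ring spectrum. The tautological line bundle defines an $\mathbb{E}_\infty$-map $\mathbb{CP}^{\infty} \to GL_1(KU)$ sending $L \mapsto [L]$, which adjoints to a ring map $\Sigma^{\infty}_+ \mathbb{CP}^{\infty} \to KU$. Under this map, the Bott element $\beta \in \pi_2(\Sigma^{\infty}_+ \mathbb{CP}^{\infty})$, defined by the stable map $S^2 \simeq \mathbb{CP}^1 \hookrightarrow \mathbb{CP}^{\infty}$, is sent to $[H] - 1 \in \pi_2 KU$, the standard Bott generator and in particular a unit. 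The universal property of localization then produces the desired map $\phi$.

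To show $\phi$ is an equivalence, observe that both sides are $2$-periodic, so it suffices to check that $\phi$ induces isomorphisms $\pi_0 \cong \mathbb{Z}$ and $\pi_1 = 0$. I would extract this from the Atiyah--Hirzebruch spectral sequence
$$E_2^{p,q} = H_p(\mathbb{CP}^{\infty}; \pi_q \mathbb{S}) \Longrightarrow \pi_{p+q}(\Sigma^{\infty}_+ \mathbb{CP}^{\infty}),$$
observing that the generator $x \in H_2(\mathbb{CP}^{\infty}; \mathbb{Z})$ detects $\beta$ and is therefore invertible in the $\beta$-localized spectral sequence. Rationally this immediately gives $\pi_*(\Sigma^{\infty}_+ \mathbb{CP}^{\infty}[\beta^{-1}])_{\mathbb{Q}} \cong \mathbb{Q}[x^{\pm 1}]$, matching $\pi_* KU_{\mathbb{Q}}$. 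The main obstacle is handling positive-filtration contributions from $p$-torsion classes in $\pi_* \mathbb{S}$: heuristically, multiplication by $\beta$ shifts filtration downward, so any positive-filtration torsion class is eventually pushed into negative filtration, where it must be killed. Making this rigorous requires either the classical analysis of differentials in Snaith's original argument, or a comparison with the already-established equivalence $\Sigma^{\infty}_+ BU[\beta^{-1}] \simeq MUP$ via the natural map $\mathbb{CP}^{\infty} \to BU$ together with the complex orientation $MUP \to KU$.
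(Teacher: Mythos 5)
The paper does not prove this result; it attributes it to Snaith and treats it as a black box, so there is no internal proof to compare against. Your construction of $\phi$ is correct and standard: $\mathbb{CP}^{\infty}\simeq BU(1)$ is an $\mathbb{E}_\infty$-space under tensor product, the classifying map $\mathbb{CP}^{\infty}\to GL_1(KU)$ is $\mathbb{E}_\infty$, the Bott element is sent to $[H]-1$, and the universal property of localization produces $\phi$ as a ring map. The reduction to checking $\pi_0\cong\mathbb{Z}$ and $\pi_1=0$ via $2$-periodicity is also sound, as is the rational computation.

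The genuine gap is the step you yourself flag: the integral computation of $\pi_0$ and $\pi_1$ of $\Sigma^{\infty}_+\mathbb{CP}^{\infty}[\beta^{-1}]$ is not a routine consequence of the AHSS, and it \emph{is} the content of Snaith's theorem. The heuristic that inverting $\beta$ pushes positive-filtration torsion into negative filtration needs a concrete argument — Snaith's original proof uses the stable splitting of $Q\mathbb{CP}^{\infty}_+$ rather than a direct AHSS analysis, and the modern treatments (e.g.\ Gepner--Snaith) proceed via a projective bundle formula and Landweber exactness rather than by chasing differentials. Your fallback of comparing against $\Sigma^{\infty}_+BU[\beta^{-1}]\simeq MUP$ does not close the gap as stated: knowing that the composite $\Sigma^{\infty}_+\mathbb{CP}^{\infty}[\beta^{-1}]\to\Sigma^{\infty}_+BU[\beta^{-1}]\simeq MUP\to KU$ exists tells you nothing about whether the first map induces an isomorphism on homotopy, and moreover the $BU$ case is itself an independent theorem of Snaith that the paper also imports without proof. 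To make that comparison work one would at minimum need to exploit the retraction given by the determinant $BU\to\mathbb{CP}^{\infty}$, and even then an identification of the resulting idempotent on $KU$ would be required. As written, the proposal is a correct \emph{setup} with the main theorem left as a cited input.
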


\begin{rmk}
Unlike with periodic complex bordism, the above equivalence can be lifted to one of $\mathbb{E}_\infty$-ring spectra.  This follows immediately from \cite[Theorem 6.2]{BakerRichter}, and one can also deduce it from the existence of a symmetric monoidal inclusion functor
$$\mathbb{CP}^{\infty} \simeq \{\text{Complex Lines},\otimes \}^{\simeq} \longrightarrow \{\text{Virtual Complex Vector Spaces},\otimes \}^{\simeq} \simeq BU \times \mathbb{Z}.$$
\end{rmk}

Since the determinant map $$BU \longrightarrow BU(1)\simeq \mathbb{CP}^{\infty}$$ is an infinite loop map, one has the following theorem:

\begin{thm}[Snaith]
There is a map of $\mathbb{E}_\infty$-ring spectra
$$\Sigma^{\infty}_+ BU[\beta^{-1}] \longrightarrow \Sigma^{\infty}_+ \mathbb{CP}^{\infty}[\beta^{-1}] \simeq KU.$$
\end{thm}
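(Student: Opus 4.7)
The plan is to leverage the three ingredients already in play: the $\mathbb{E}_\infty$-ring structure on $\Sigma^{\infty}_+ BU$ coming from the infinite loop space structure on $BU$, the analogous structure on $\Sigma^{\infty}_+ \mathbb{CP}^{\infty}$, and the fact (cited just before the theorem) that the determinant $\det \colon BU \longrightarrow BU(1) \simeq \mathbb{CP}^{\infty}$ is an infinite loop map. These assemble into a map of $\mathbb{E}_\infty$-rings, and the last step will be to check compatibility with inverting the Bott element.

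First, I would recall that the functor $\Sigma^{\infty}_+ \colon \text{Spaces} \longrightarrow \text{Spectra}$ is symmetric monoidal from $(\text{Spaces}, \times)$ to $(\text{Spectra}, \wedge)$. In particular, it sends $\mathbb{E}_\infty$-monoids in spaces to $\mathbb{E}_\infty$-ring spectra, and $\mathbb{E}_\infty$-monoid maps (equivalently, for grouplike objects, infinite loop maps) to $\mathbb{E}_\infty$-ring maps. Applying this functoriality to $\det$ yields an $\mathbb{E}_\infty$-ring map
$$\Sigma^{\infty}_+ BU \longrightarrow \Sigma^{\infty}_+ \mathbb{CP}^{\infty}.$$

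Next, I would check that the Bott element on the left is carried to (a unit multiple of) the Bott element on the right. The Bott class $\beta \in \pi_2(\Sigma^{\infty}_+ BU)$ is, by construction, the image of the generator of $\pi_2(\Sigma^{\infty}_+ \mathbb{CP}^{\infty})$ under the inclusion $i \colon \mathbb{CP}^{\infty} \simeq BU(1) \hookrightarrow BU$. Since $\det \circ i = \mathrm{id}_{BU(1)}$, the composite $\Sigma^{\infty}_+ \mathbb{CP}^{\infty} \to \Sigma^{\infty}_+ BU \to \Sigma^{\infty}_+ \mathbb{CP}^{\infty}$ is the identity, so $\det_*(\beta) = \beta$ in $\pi_2(\Sigma^{\infty}_+ \mathbb{CP}^{\infty})$.

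Finally, inverting a homotopy element in an $\mathbb{E}_\infty$-ring is a universal construction: the natural map $R \to R[x^{-1}]$ is initial among $\mathbb{E}_\infty$-ring maps out of $R$ which carry $x$ to a unit. Since $\det_*(\beta)$ is already invertible in $\Sigma^{\infty}_+ \mathbb{CP}^{\infty}[\beta^{-1}]$, the universal property produces a unique $\mathbb{E}_\infty$-ring extension
$$\Sigma^{\infty}_+ BU[\beta^{-1}] \longrightarrow \Sigma^{\infty}_+ \mathbb{CP}^{\infty}[\beta^{-1}] \simeq KU,$$
where the last equivalence is the $\mathbb{E}_\infty$ refinement of Snaith's theorem noted in the preceding remark. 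There is no real obstacle here; the only thing one must be careful about is not to confuse the multiplicative $\mathbb{E}_\infty$-ring structure on $KU$ (built from tensor product of line bundles via $\mathbb{CP}^{\infty}$) with the Thom-spectrum structure on $MUP$, which is precisely the distinction that the rest of the paper exists to clarify.
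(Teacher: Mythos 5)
Your argument is correct and is exactly the elaboration the paper intends when it justifies this theorem with the single observation that $\det\colon BU \to BU(1)$ is an infinite loop map: apply the symmetric monoidal functor $\Sigma^\infty_+$ to $\det$ to get an $\mathbb{E}_\infty$-ring map, observe via $\det\circ i = \mathrm{id}$ that the Bott class is preserved, and invoke the universal property of inverting an element. The one point worth flagging is that you are (correctly, and implicitly) only using $\det\circ i = \mathrm{id}$ at the level of spaces, not as an $\mathbb{E}_\infty$-monoid identity --- the inclusion $i\colon(\mathbb{CP}^\infty,\otimes)\hookrightarrow(BU,\oplus)$ is not an infinite loop map, but that is irrelevant for tracking a single homotopy class in $\pi_2$.
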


This construction of an $\mathbb{E}_\infty$-orientation of $KU$ is so canonical that it can be ported into other settings, such as motivic homotopy theory \cite{GepnerSnaith}.  In contrast, while it seems to be folklore that there is an $\mathbb{E}_\infty$-ring homomorphism $$MUP \longrightarrow KU,$$ the authors could find no proof of this in the literature.  We provide in the final section of this document a computational proof of the slightly weaker result that there is an $\mathbb{E}_\infty$-ring map from $MUP$ into the $2$-completion of $KU$.  We prove this as part of a more general story.

\begin{dfn}
A \textit{form of $K$-theory} (at the prime $2$) is a Morava $E$-theory corresponding to a height $1$ formal group over $\mathbb{F}_2$. \cite{MorKThy,Charmaine}
\end{dfn}

\begin{rmk}
Forms of $K$-theory are $\mathbb{E}_\infty$-ring spectra.  They are $2$-periodic, with each odd homotopy group isomorphic to $0$ and each even homotopy group isomorphic to the $2$-adic integers $\mathbb{Z}_2$.  Choosing the multiplicative formal group over $\mathbb{F}_2$ yields a form of $K$-theory that is the $2$-completion $KU^{\wedge}_2$ of topological complex $K$-theory.
\end{rmk}

\begin{thm} \label{Thm:FormOrientation}
Let $E$ denote a form of $K$-theory.  Then there is both an $\mathbb{E}_\infty$-ring homomorphism
$$MUP \longrightarrow E$$
and an $\mathbb{E}_\infty$-ring homomorphism
$$\Sigma^{\infty}_+ BU[\beta^{-1}] \longrightarrow E.$$
\end{thm}

\begin{rmk} We believe that slight variants of our arguments work just as well at odd primes $p>2$.  We work at the prime $2$ for simplicity.  Our arguments do not provide orientations of Morava $E$-theories $E_{(k,\mathbb{G})}$ associated to larger characteristic $2$ fields $k$, unless the formal group $\mathbb{G}$ over $k$ is isomorphic to one pushed forward from a height $1$ formal group over $\mathbb{F}_2$.
\end{rmk}

\begin{rmk} 
Highly structured $MU$ (as opposed to $MUP$) orientations of forms of $K$-theory have been constructed in several locations \cite{Moll,HopLaw}, with the case of $p$-adic $K$-theory receiving particular attention in \cite{Walker}.
\end{rmk}

A number of open questions are raised by our work here.

\begin{qst} Is there an $\mathbb{E}_4$-equivalence $\Sigma^{\infty}_+ BU[\beta^{-1}] \simeq MUP$?  The authors find such an equivalence very believable, but find difficulties proving it that are related to the difficulties Chadwick and Mandell \cite{ChadwickMandell} encounter when trying to check whether the Quillen idempotent on $MU$ is $\mathbb{E}_4$.
\end{qst}

\begin{qst} Is there an $\mathbb{E}_\infty$-ring homomorphism $MU \longrightarrow \Sigma^{\infty}_+ BU[\beta^{-1}]$?
\end{qst}

\begin{rmk} Tyler Lawson has emphasized to us that there are yet other $\mathbb{E}_\infty$-ring spectra that might naturally be called periodic complex bordism, such as the Tate spectrum $MU^{tS^1}$.  It would be enlightening to have a catalogue of various $\mathbb{E}_\infty$ `forms' of periodic complex bordism, as well as forms of periodic integral homology such as $H\mathbb{Z}^{tS^1}$.
\end{rmk}

\begin{qst}
In the sense of the previous remark, which forms of periodic complex bordism orient which forms of periodic integral homology?  Is there a form of periodic integral homology that is $\mathbb{E}_\infty$-oriented by $\Sigma^{\infty}_+ BU[\beta^{-1}]$?
\end{qst}

\begin{qst}
In addition to the famous open question of which Morava $E$-theories are $\mathbb{E}_\infty$-oriented by $MU$ \cite{HopLaw}, which Morava $E$-theories are $\mathbb{E}_\infty$-oriented by which forms of periodic complex bordism?
\end{qst}

\subsection{Acknowledgements}
The authors are grateful to David Gepner, Tyler Lawson, Haynes Miller, Eric Peterson, and Andrew Senger for numerous helpful conversations.  They would like to extend special thanks to Mike Hopkins and Jacob Lurie for  their numerous insights and constant encouragement.  The authors were supported by the NSF under Grants DMS-1803273 and DGE-1122374.

\section{Preliminaries} \label{sec:prelim}

 \subsection{Periodic Thom Spectra}

\begin{dfn}\label{dfn:mup}
Let $MUP$ denote the $\E_{\infty}$ Thom spectrum \cite{LMS} associated to the complex $J$-homomorphism $$J: ku \to \pic \mathbb{S}.$$  There is a canonical element $u\in \pi_2(MUP)$ such that $\pi_*(MUP) = \pi_*(MU)[u^{\pm 1}]$.  
\end{dfn}

\begin{rmk}\label{rmk:mupe2split}
Applying $\Omega^2$ to the inclusion $\mathbb{CP}^{\infty} \to BU$ yields a double loop map $\mathbb{Z} \to BU \times \mathbb{Z}$. The Thom spectrum of this map $\Z \to BU \times \Z$ is an $\E_2$-ring spectrum $$P \simeq \bigvee_{i\in \mathbb{Z}} S^{2i}.$$  This yields an identification of $\E_2$-rings $MUP \simeq MU \smsh P.$  However, the spectrum $P$ cannot be given the structure of an $\mathbb{E}_{\infty}$-ring \cite[Proposition VII.6.1]{Hinfty}, and so there is no similar expression for $MUP$ as a smash product of $\mathbb{E}_\infty$-ring spectra.  
\end{rmk}

Note that $MUP$ admits an $\E_{\infty}$-ring map from $MU$ induced by the map of $\E_{\infty}$ spaces $BU \to BU \times \Z$.  Using this, Definition \ref{dfn:mup} allows one to define a periodic version of any $\E_{\infty}$ $MU$-algebra:

\begin{cnstr}
The natural truncation map $MU \to H\Z$ exhibits $H\Z$ as an $\E_{\infty}$-$MU$-algebra. 
Let $H\Z P$ denote the relative smash product $MUP \smsh_{MU} H\Z$.  By construction, $H\Z P$ is an $\E_{\infty}$-ring spectrum equipped with an $\E_{\infty}$-ring map $MUP \to H\Z P$.  
\end{cnstr}

\subsection{Complex orientations and total Chern classes}

Recall that the integral cohomology of $BU$ is given by $$H^*(BU;\Z) \cong \Z[[c_1,c_2,\cdots]].$$ 
This isomorphism allows one to define the Chern classes of any complex vector bundle $V$ over a space $X$.  One may assemble these Chern classes into a \emph{total Chern class} $c(V)$, which is a power series in the formal variable $u$: $$c(V) = 1+c_1(V) u+ c_2(V) u^2 + \cdots \in H^*(X;\Z)[[u]].$$  The total Chern class construction $V \mapsto c(V)$ takes addition of vector bundles to multiplication of power series.
 
\begin{dfn}
Let $E$ be a homotopy commutative ring spectrum. A \emph{complex orientation} of $E$ is a map of homotopy commutative ring spectra $MU \to E$. A \emph{periodic complex orientation} of $E$ is a map of homotopy commutative ring spectra $MUP \to E$. 
\end{dfn}


\begin{rmk}  By Remark \ref{rmk:mupe2split}, we have an identification of homotopy commutative rings $MUP \simeq MU \smsh P$.  Since the smash product is the coproduct in the category of homotopy commutative rings, a periodic complex orientation of $E$ is the separate data of a complex orientation of $E$ and the additional choice of a unit in $u\in \pi_2 E$.
\end{rmk}


\begin{exm}\label{exm:buhzp}
By construction, there is a canonical periodic complex orientation $MUP \to H\Z P.$  
\end{exm}

Snaith \cite{SnaithOriginal} constructed an equivalence of homotopy commutative rings $\Sigma^{\infty}_+ BU[\beta^{-1}] \to MUP.$  Composing with the periodic complex orientation $MUP \to H\Z P$, we obtain a ring map $$\Sigma^{\infty}_+BU \to H \Z P$$ which is the same data as a map of $H$-spaces $$BU \to GL_1 H\Z P =  \Z/2 \times \prod_{i\geq 1} K(\Z , 2i).$$  Snaith's equivalence is constructed such that the components of this map correspond to the elements $c_i \in H^{2i}(BU; \Z ).$  Together, they assemble to the usual total Chern class $$c = 1+c_1 u + c_2 u^2 + \cdots \in H\Z P^0(BU).$$  The fact that the map $BU\to GL_1 H\Z P$ is a map of $H$-spaces encodes the fact that the total Chern class takes sums of vector bundles to multiplication of power series.  

Different choices of periodic complex orientation $MUP\to H\Z P$ beget different notions of total Chern class.  We can describe these other total Chern classes in terms of the above one as follows.  

By the splitting principle, a total Chern class is determined by its value on line bundles, which is the composite $$BU(1) \to BU \to GL_1H\Z P.$$  For the total Chern class constructed above, this is given by $1+ut$, where $t\in H^2(\mathbb{C}P^{\infty})$ is the generator.  Any other periodic complex orientation of $H\Z P$ arises from this one by changing the coordinate and/or choice of unit.  The unit is determined up to sign, and the coordinate can be changed by any series of the form $t \mapsto a_1 t + a_2 ut^2 + a_3 u^2t^3 + \cdots$ where $a_i \in \Z$ and $a_1 = \pm 1$ is a unit.  We can therefore associate to any ring map $MUP \to H\Z P$ its corresponding total Chern class for line bundles, which is a series $$r(t) = 1 + a_1 u t + a_2 u^2 t^2 + a_3 u^3 t^3 + \cdots \in H\Z P^0(\mathbb{C}P^{\infty}),$$ where $a_i\in \Z$ are integers and $a_1= \pm 1$ is a unit.

\subsection{Power operations on $MUP$}

Let $X$ be a space and let $E$ be an $\E_{\infty}$-ring spectrum.  Suppose that we are given a class $\gamma \in E^0(X)$, represented by a map $\gamma: \Sigma^{\infty}_+ X \to E$.  The composite
$$(\Sigma^{\infty}_+ X)_{hC_2} \to (\Sigma^{\infty}_+ (X\times X))_{hC_2} \to (E\smsh E)_{hC_2} \to E$$
determines a new cohomology class which we denote $P^2_E(\gamma) \in E^0(X\times \mathbb{R}P^{\infty})$ and refer to as the \emph{total power operation} on $\gamma$.   This construction has the following elementary properties:

\begin{lem} [\cite{BMMS}, Corollary II.1.6] \label{lem:powopbasic}
Let $X, E$ as above and $a,b\in E^0(X)$.  Then the following formulas hold:
\begin{enumerate}
\item $P^2_{E}(ab) = P^2_E(a) P^2_E(b).$
\item $P^2_{E}(a+b) = P^2_E(a) + P^2_E(b) + \mathrm{tr}(ab)$ where $\mathrm{tr}:E^0(X) \to E^0(X\times \mathbb{R}P^{\infty})$ is induced by the stable transfer $\Sigma^{\infty}_+ BC_2 \to \mathbb{S}.$  
\end{enumerate}
\end{lem}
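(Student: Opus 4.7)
The plan is to derive both identities directly from the definition of $P^2_E$, exploiting functoriality of the extended power construction $D_2 Y := (Y \smsh Y)_{hC_2}$ together with its compatibility with the $\E_\infty$-structure on $E$. For any $\gamma : \Sigma^{\infty}_+ X \to E$, the definition can be rewritten as
$$P^2_E(\gamma) \;=\; \mu_E \circ D_2(\gamma) \circ \delta_X,$$
where $\delta_X : (\Sigma^{\infty}_+ X)_{hC_2} \to D_2(\Sigma^{\infty}_+ X)$ is induced by the $C_2$-equivariant diagonal $\Delta : X \to X \times X$ (with trivial $C_2$-action on the source) and $\mu_E : D_2 E \to E$ is the structure map of the $\E_\infty$-algebra $E$.

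For the multiplicativity (1), I would factor the product $ab : \Sigma^{\infty}_+ X \to E$ as $m_E \circ (a \smsh b) \circ \Delta_*$, where $m_E : E \smsh E \to E$ is the binary multiplication, and apply $D_2$ throughout. The essential ingredient is a natural comparison map relating $D_2(A \smsh B)$ to $D_2(A) \smsh D_2(B)$, obtained by combining the symmetric monoidal swap $A^{\smsh 2} \smsh B^{\smsh 2} \simeq (A \smsh B)^{\smsh 2}$ with the natural map on homotopy orbits along the diagonal $C_2 \hookrightarrow C_2 \times C_2$. Together with the $\E_\infty$-compatibility of $\mu_E$ and $m_E$, a direct diagram chase then identifies the composite $\mu_E \circ D_2(ab) \circ \delta_X$ with the ordinary product $P^2_E(a) \cdot P^2_E(b)$ in $E^0(X \times BC_2)$.

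For the additivity with transfer (2), I would invoke the standard splitting
$$D_2(Y_1 \vee Y_2) \;\simeq\; D_2(Y_1) \;\vee\; D_2(Y_2) \;\vee\; (Y_1 \smsh Y_2),$$
which arises from decomposing $(Y_1 \vee Y_2)^{\smsh 2}$ into its two $C_2$-fixed diagonal pieces $Y_i^{\smsh 2}$ and the free $C_2$-orbit through $Y_1 \smsh Y_2$ (free orbits contribute their underlying smash product to the homotopy orbits). Applied with $Y_1 = Y_2 = \Sigma^{\infty}_+ X$ to the fold-and-pair presentation of $a + b$, the two fixed summands contribute $P^2_E(a)$ and $P^2_E(b)$ respectively, while the free-orbit summand produces the map representing $ab$ composed with the stable transfer $\Sigma^{\infty}_+ BC_2 \to \mathbb{S}$ that encodes the free $C_2$-orbit contribution, yielding exactly the correction $\mathrm{tr}(ab)$.

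The main obstacle is bookkeeping: managing the equivariant identifications, especially the repeated use of $(\Sigma^{\infty}_+ Y)_{hC_2} \simeq \Sigma^{\infty}_+ (Y \times BC_2)$ when $Y$ carries the trivial $C_2$-action, together with the shuffle conventions in the comparison map for $D_2$. Once these identifications are set up carefully, both identities reduce to routine diagram chases in the stable homotopy category, as is carried out in detail in \cite[Chapter II]{BMMS}.
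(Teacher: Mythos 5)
The paper offers no proof of this lemma; it simply cites \cite[Corollary~II.1.6]{BMMS}. Your sketch is a correct outline of the argument actually given there (and in the surrounding material of BMMS, Chapter~II): part (1) comes from the natural map $D_2(A\smsh B)\to D_2 A\smsh D_2 B$ along the diagonal $C_2\hookrightarrow C_2\times C_2$ together with the compatibility of the structure map $\mu_E\colon D_2E\to E$ with the multiplication $m_E$, and part (2) comes from the splitting $D_2(Y_1\vee Y_2)\simeq D_2Y_1\vee D_2Y_2\vee (Y_1\smsh Y_2)$, with the free $C_2$-orbit summand contributing the transfer term. So your approach coincides with the cited source.
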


We will be concerned with the special case $X=\mathbb{C}P^{\infty}$, $E=MUP$, and where $\gamma = ut \in MUP^0(\mathbb{C}P^{\infty})$ is the composite $$\mathbb{C}P^{\infty} \xrightarrow{t} \Sigma^2MU \xrightarrow{u} MUP$$ of the canonical class $t\in MU^2(\mathbb{C}P^{\infty})$ with the periodicity element.  We have that $$MU^*(\mathbb{C}P^{\infty} \times \mathbb{R}P^{\infty}) = MU_*[[t,z]]/([2]_{MU}(z)),$$ where $z\in MU^2(\mathbb{R}P^{\infty})$ is the pullback of $t$ under the natural map $\mathbb{R}P^{\infty} \to \mathbb{C}P^{\infty}$.  Analogously,  $$MUP^*(\mathbb{C}P^{\infty}\times \mathbb{R}P^{\infty}) \simeq MU_*[[t, z]][u^{\pm 1}]/([2]_{MU}(z)).$$  Quillen essentially computed the total power operation in $MUP$ cohomology:

\begin{prop}[\cite{Quillen}]\label{prop:quillencomp}
The total power operation on $ut\in MUP^0(\mathbb{C}P^{\infty})$ is given by 
$$P^2(ut) = u^2t(t+_{MU} z) \in MUP^0(\mathbb{C}P^{\infty} \times \mathbb{R}P^{\infty}).$$
\end{prop}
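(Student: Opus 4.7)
The plan is to carry out Quillen's geometric computation \cite{Quillen} by interpreting $ut \in MUP^0(\mathbb{CP}^\infty)$ as $c_1^{MUP}(L)$, the $MUP$-first Chern class of the tautological line bundle $L$. Under this identification, the external square is a second Chern class:
\[
(ut) \boxtimes (ut) = u^2\, t_1 t_2 = c_2^{MUP}(\pi_1^* L \oplus \pi_2^* L) \in MUP^0\bigl((\mathbb{CP}^\infty)^2\bigr),
\]
where $t_i := \pi_i^* t$ and I write $c_n^{MUP}(V) := u^n c_n^{MU}(V)$ for the degree-zero $MUP$-Chern classes.

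Next, I would equip $(\mathbb{CP}^\infty)^2$ with the $C_2$-swap action, so that $\pi_1^* L \oplus \pi_2^* L$ becomes a $C_2$-equivariant rank-two complex vector bundle with $C_2$ permuting the summands. The key claim---following Quillen---is that $P^2(ut)$ is computed as the Borel-equivariant refinement of the second $MUP$-Chern class of this bundle, restricted along the diagonal $\mathbb{CP}^\infty \times BC_2 \hookrightarrow \bigl((\mathbb{CP}^\infty)^2\bigr)_{hC_2}$.

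Restricted to the diagonal, the bundle becomes $L \otimes \rho$ on $\mathbb{CP}^\infty$, with $\rho = \mathbb{C} \oplus \mathbb{C}(\epsilon)$ the regular representation of $C_2$ decomposed into trivial plus sign characters. Passing to the Borel quotient $\mathbb{CP}^\infty \times BC_2 \simeq \mathbb{CP}^\infty \times \mathbb{RP}^\infty$, the sign character $\mathbb{C}(\epsilon)$ becomes tensor product with the complexified tautological real line bundle $M_\mathbb{C}$ on $\mathbb{RP}^\infty$, whose $MU$-first Chern class is $z$ by definition. Hence the Borel bundle is $L \oplus (L \otimes M_\mathbb{C})$, and applying the formal group law to compute $c_1^{MU}(L \otimes M_\mathbb{C}) = t +_{MU} z$ yields
\[
P^2(ut) = c_2^{MUP}\bigl(L \oplus (L \otimes M_\mathbb{C})\bigr) = c_1^{MUP}(L) \cdot c_1^{MUP}(L \otimes M_\mathbb{C}) = ut \cdot u(t +_{MU} z) = u^2 t(t +_{MU} z).
\]

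The main obstacle is justifying the identification in the second paragraph: that the abstractly-defined $P^2(ut)$---built from the $hC_2$-construction and the $\mathbb{E}_2$-multiplication on $MUP$---coincides with the geometric Borel-equivariant Chern class above. This compatibility is the content of Quillen's original argument, and expresses the fact that complex orientations intertwine with the $\mathbb{E}_\infty$-power operation structure. As a sanity check one can reduce modulo $2$ to $H\mathbb{F}_2$-cohomology, where the formula becomes $P^2(x) = x(x+z)$, matching the Cartan formula $\mathrm{Sq}^2(x) = x^2$ for $x \in H^2(\mathbb{CP}^\infty;\mathbb{F}_2)$; the universality of the formal group law of $MU$ among complex-oriented theories then pins down the formal-group correction uniquely.
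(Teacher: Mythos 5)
Your splitting-principle calculation is the same one Quillen did and the paper cites, and the final answer is correct; but the step you flag as ``the main obstacle'' is precisely the content of the proposition, and the justification you offer for it does not stand. The principle that ``complex orientations intertwine with the $\mathbb{E}_\infty$-power operation structure'' is false in general: a complex orientation is only a map of homotopy commutative rings, and such maps need not commute with power operations --- indeed the failure of exactly this sort of intertwining (for $\Sigma^{\infty}_+ BU[\beta^{-1}] \to H\Z P$) is what drives the paper's main theorem, so it cannot be invoked as a blanket fact. Nor is the needed compatibility literally ``the content of Quillen's original argument'': Quillen computes the extended-power ($H_{\infty}^2$) operation for the classical Thom-spectrum structure on $MU$, whereas this proposition concerns the power operation attached to the particular $\mathbb{E}_\infty$ structure on $MUP$ fixed by the paper's convention, namely the Thom spectrum of $J\colon ku \to \pic \mathbb{S}$. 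What has to be proved is that this $\mathbb{E}_\infty$ structure, together with the periodicity class $u$, refines Quillen's $H_{\infty}^2$ structure on $MU$ --- equivalently, that the abstractly defined composite $(\Sigma^{\infty}_+\mathbb{C}P^{\infty})_{hC_2} \to (MUP \smsh MUP)_{hC_2} \to MUP$ built from $ut$ agrees with $u^2$ applied to Quillen's geometric class. Your proposal asserts this identification rather than proving it, and that is a genuine gap.

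The paper closes exactly this gap: it compares the map $(\Sigma^2 MU \smsh \Sigma^2 MU)_{hC_2} \xrightarrow{\alpha} \Sigma^4 MU \xrightarrow{u^2} MUP$ (Thom isomorphism followed by the $MU$-multiplication, which is where Quillen's formula $t(t+_{MU}z)$ lives) with the $\mathbb{E}_\infty$-multiplication $(MUP \smsh MUP)_{hC_2} \to MUP$, and shows the relevant square commutes by exhibiting it as the Thomification of a commutative square of spaces, namely $(BU \times BU)_{hC_2} \to BU \xrightarrow{i_2} BU \times \Z \xrightarrow{J} \mathrm{Pic}(\mathbb{S})$ against $i_1 \times i_1$, using that both the multiplication on $MUP$ and the class $ut$ arise from space-level data via the symmetric monoidal $J$-homomorphism. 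Some argument of this shape (or an equivalent identification of your Borel-equivariant Chern class with the $D_2$-construction for this specific $\mathbb{E}_\infty$ structure) is indispensable; once it is in place, your computation of $c_2$ of $L \otimes \rho$ over $\mathbb{C}P^{\infty} \times \mathbb{R}P^{\infty}$ does yield $u^2 t(t+_{MU}z)$ exactly as in the paper. Finally, the mod~$2$ ``sanity check'' cannot substitute for this step: reducing to $\mathrm{Sq}$-operations only confirms the answer modulo $2$, and universality of the $MU$ formal group law does not by itself pin down the integral formula for an a priori unidentified power operation.
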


\begin{proof}
Consider the diagram 

\begin{equation*}
\begin{tikzcd}
\Sigma^{\infty}_+(\mathbb{C}P^{\infty}\times \mathbb{R}P^{\infty}) \arrow[rdd, "\tilde{P}^2_{MU}(t)", swap, dashed ]\arrow[r]&\Sigma^{\infty}_+(\mathbb{C}P^{\infty}\times \mathbb{C}P^{\infty})_{hC_2} \arrow[d,"D_2(t)"]  &\\
 &(\Sigma^2 MU \smsh \Sigma^2 MU)_{hC_2}\arrow[d,"\alpha"]\arrow[r] &(MUP \smsh MUP )_{hC_2} \arrow[d]\\
 &\Sigma^4 MU\arrow[r,"u^2"] & MUP.\\
\end{tikzcd}
\end{equation*}

Here, the bottom right square is defined to be the Thomification of the commutative square
\begin{equation*}
\begin{tikzcd}
(BU\times BU)_{hC_2} \arrow[r, "i_1\times i_1"] \arrow[d] & ((BU\times \Z)\times (BU\times \Z))_{hC_2} \arrow[d] &\\
BU \arrow[r,"i_2"] & BU\times \Z \arrow[r,"J"]& \mathrm{Pic}(\mathbb{S})\\
\end{tikzcd}
\end{equation*}
where the vertical maps are induced by direct sum of vector spaces and $i_n:BU \to BU\times \Z$ denotes the inclusion of the component corresponding to complex vector spaces of virtual dimension $n$.   The dashed composite, which we have named $\tilde{P}^2_{MU}(t)$, was computed by Quillen \cite{Quillen} (see also \cite[Theorem 7.12]{Rudyak} and \cite[Lemma 2.5.6]{Eric}) to be $t(t+_{MU}z)$.  Since the upper long composite is the definition of the total power operation on $ut \in MUP^0(\mathbb{C}P^{\infty})$, the result follows.

\end{proof} 

\begin{rmk}\label{rmk:hinfty2}
The proof essentially says that the $\E_{\infty}$ structure on $MUP$ extends the classical $H_{\infty}^2$ structure on $MU$ \cite[Corollary VIII.5.3]{BMMS}.  In the language of \cite[Problem 1.3.7]{Tyler}, the map $\Z \to \Pic(MU)$ defining $MUP$ refines the $H_{\infty}^2$ structure on $MU$ to an $\E_{\infty}^2$ structure.  
\end{rmk}

Proposition \ref{prop:quillencomp} also allows us to compute power operations in $\E_{\infty}$-ring spectra which receive an $\E_{\infty}$-ring map from $MUP$.  In particular, using the fact that $H\Z$ carries the additive formal group law, we deduce the following formula for power operations in $H\Z P$.  

\begin{cor}
Let $ut\in H\Z P^0(\mathbb{C}P^{\infty})$ be the class coming from the complex orientation.  Then $$P^2_{H\Z P}(ut) = u^2t(t+z).$$
\end{cor}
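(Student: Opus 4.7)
The plan is to deduce this directly from Proposition \ref{prop:quillencomp} by naturality of power operations along the $\E_\infty$-ring map $MUP \to H\Z P$ of Example \ref{exm:buhzp}. The construction of the total power operation $P^2_E$ is functorial in $\E_\infty$-ring maps $E \to E'$ in the sense that if $f: E \to E'$ is such a map and $\gamma \in E^0(X)$, then $P^2_{E'}(f_*\gamma) = f_* P^2_{E}(\gamma)$, as is immediate from the definition (the homotopy $C_2$-orbits construction commutes with pushforward along a map of $\E_\infty$-rings).

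I would apply this naturality to $f: MUP \to H\Z P$ and $\gamma = ut \in MUP^0(\mathbb{CP}^\infty)$. Because $f$ is constructed so as to send the canonical $MUP$-class $ut$ to the corresponding $H\Z P$-class of the same name, and because Proposition \ref{prop:quillencomp} gives $P^2_{MUP}(ut) = u^2 t (t +_{MU} z)$, it remains only to identify the image of $t +_{MU} z$ in $H\Z P^0(\mathbb{CP}^\infty \times \mathbb{RP}^\infty)$.

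For this last step I would observe that the composite $MU \to MUP \to H\Z P$ factors through the $\E_\infty$-ring map $MU \to H\Z$ (this is built into the construction of $H\Z P$ as the Thom spectrum of $H\Z \to \pic H\Z$ factoring through $\pic MU$). Consequently the formal group law on $MU$ is sent to the additive formal group law on $H\Z$, so $t +_{MU} z$ maps to $t + z$ in $H\Z P^0(\mathbb{CP}^\infty \times \mathbb{RP}^\infty)$. Combining these identifications yields $P^2_{H\Z P}(ut) = u^2 t (t+z)$, as desired.

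There is no real obstacle here: the entire content of the corollary is that power operations are natural in $\E_\infty$-ring maps and that $H\Z$ carries the additive formal group law. The only minor point to verify carefully is that the construction of $H\Z P$ really does identify the $MU$-orientation class with the standard integral one, but this is a direct consequence of the factorization of the map $H\Z \to \pic H\Z$ through $\pic MU$ used to define $H\Z P$.
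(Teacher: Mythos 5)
Your proof is correct and follows exactly the approach the paper intends: the paper states the corollary without a separate proof, but the sentence preceding it explicitly invokes naturality of power operations along the $\E_\infty$-ring map $MUP \to H\Z P$ together with the fact that $H\Z$ carries the additive formal group law, which is precisely your argument.
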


In addition, we will record the following basic lemma about power operations in $H\Z P$:
\begin{lem}\label{lem:hzpbasic}
The following formulas hold:
\begin{enumerate}
\item Let $X$ be a space and $a,b\in H\Z P^0(X)$.  Then $$P^2_{H\Z P}(a+b) = P^2_{H\Z P}(a) + P^2_{H\Z P}(b) + 2ab,$$ where $2ab$ refers to its image under the natural map $H\Z P^0(X) \to H\Z P^0(X \times \mathbb{R}P^{\infty})$ induced by projection.  
\item The power operation on $n\in \pi_0(H\Z P) \simeq \Z$ is given by $P^2_{H\Z P}(n) = n^2$.  
\end{enumerate}
\end{lem}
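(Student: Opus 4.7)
To establish (1), I would invoke Lemma~\ref{lem:powopbasic}(2), which reduces the claim to identifying $\mathrm{tr}(ab)$ with $2ab$ in $H\Z P^{0}(X \times \mathbb{R}P^{\infty})$. Since $\mathrm{tr}$ is defined by smashing with the stable transfer $\tau : \Sigma^{\infty}_{+}\mathbb{R}P^{\infty} \to \mathbb{S}$, the external product formula gives $\mathrm{tr}(ab) = \pi_{X}^{*}(ab) \cdot \pi_{\mathbb{R}P^{\infty}}^{*}(\overline{\tau})$, where $\overline{\tau}\in H\Z P^{0}(\mathbb{R}P^{\infty})$ denotes the image of $\tau$ under the unit map $\mathbb{S} \to H\Z P$. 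The task thus reduces to identifying $\overline{\tau}$ with the constant class $2 \in \Z \subset H\Z P^{0}(\mathbb{R}P^{\infty})$. This follows from two standard facts about the Becker--Gottlieb transfer for the universal degree-2 covering $\mathrm{pt} \to BC_{2}$: it is degree-preserving on $H\Z$-cohomology (so $\overline{\tau}$ lies in the degree-zero summand $H^{0}(\mathbb{R}P^{\infty};\Z) = \Z$), and the identity $p^{*}\circ p_{!} = \deg(p) = 2$ pins down $\overline{\tau} = 2$.

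For (2), the plan is to first show $P^{2}_{H\Z P}(1) = 1$ using the general fact that the power operation on the multiplicative unit of any $\E_{\infty}$-ring represents the constant class $1$; the composite $\Sigma^{\infty}_{+}BC_{2} \to (H\Z P \wedge H\Z P)_{hC_{2}} \to H\Z P$ for $\gamma = 1$ factors through the collapse $\Sigma^{\infty}_{+}BC_{2} \to \mathbb{S}$ followed by the unit of $H\Z P$. Then, applying part (1) to $0 = 0+0$ yields $P^{2}(0) = 2P^{2}(0)$, hence $P^{2}(0) = 0$. Finally, I would proceed by induction on $|n|$: for $n \geq 0$, assuming $P^{2}(n) = n^{2}$, part (1) gives
\[
P^{2}(n+1) = P^{2}(n) + P^{2}(1) + 2n \cdot 1 = n^{2} + 2n + 1 = (n+1)^{2},
\]
and for negative $n$, applying part (1) to $n + (-n) = 0$ yields $0 = n^{2} + P^{2}(-n) - 2n^{2}$, so $P^{2}(-n) = n^{2} = (-n)^{2}$.

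The main obstacle is the transfer computation in part (1); specifically, one must verify that the components of $\overline{\tau}$ in the $\prod_{k \geq 1}\Z/2$ summands of $H\Z P^{0}(\mathbb{R}P^{\infty}) = \Z \oplus \prod_{k \geq 1}\Z/2$ all vanish. This follows from the general fact that the Becker--Gottlieb transfer preserves cohomological degree, but some care is needed to phrase this precisely in the stable-homotopical setup; an alternative is to compute $\overline{\tau}$ directly by observing that the composition of $\tau$ with pullback to a point equals $2$, which combined with the degree-preservation forces $\overline{\tau} = 2$. Once this is established, the remaining content of the lemma is elementary algebra.
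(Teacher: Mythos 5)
Your proof is correct and follows the same route as the paper's, which is extremely terse: the paper's entire argument for part (1) is the observation that the transfer class equals $2$, and for part (2) it just says ``use $P^2(1)=1$ and induct.'' You have supplied the missing details, including the reduction of $\mathrm{tr}(ab)$ to an external product $\pi_X^*(ab)\cdot\pi^*_{\mathbb{R}P^{\infty}}(\overline{\tau})$, the computation $P^2(1)=1$ via the $\E_\infty$ unit, and the (two-sided) induction.

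One small point worth tightening: your justification that $\overline{\tau}$ has no components in the $\prod_{k\geq 1}\Z/2$ summands of $H\Z P^0(\mathbb{R}P^\infty)$ is phrased in terms of the Becker--Gottlieb transfer ``preserving cohomological degree,'' which is a bit slippery since the transfer is just a single stable map $\Sigma^\infty_+ BC_2\to\mathbb{S}$. The cleaner observation is structural: the unit $\mathbb{S}\to H\Z P$ factors through $H\Z$, since $H\Z P$ is by construction an $\E_\infty$-$H\Z$-algebra (a Thom spectrum over $H\Z$) with $H\Z\to H\Z P$ hitting the degree-$0$ wedge summand. Thus $\overline{\tau}$ is the image of the class $2\in H^0(\mathbb{R}P^\infty;\Z)=\Z$ under $H\Z^0(\mathbb{R}P^\infty)\to H\Z P^0(\mathbb{R}P^\infty)$, which lands in the $\Z$ summand by construction. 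This replaces the appeal to ``degree preservation'' with the concrete factorization of the unit map. With that minor adjustment, the proof is complete and matches the paper's intent.
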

\begin{proof}
The first statement follows from the first statement of Lemma \ref{lem:powopbasic} by observing that the transfer in $H\Z P$ is just multiplication by $2$.  The second statement follows by using $P^2_{H\Z P}(1) =1$ and inducting using the first statement.  
\end{proof}

\section{Obstructions} \label{sec:PowOp}

We now turn to the proof of Theorem \ref{thm:main}.  The first half follows from:

\begin{prop}
Any map of homotopy commutative rings $$\Sigma^{\infty}_+ BU[\beta^{-1}] \to MUP$$ can be promoted to an $\E_2$-map.  
\end{prop}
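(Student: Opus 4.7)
The plan is to leverage the $\E_2$-splitting $MUP \simeq MU \smsh P$ from Remark \ref{rmk:mupe2split}, where $P$ is the $\E_2$-Thom spectrum of the double loop map $\Z \to BU \times \Z$ induced by $\mathbb{CP}^\infty \to BU$, and to construct the $\E_2$-promotion by combining two factors realizing the ``$MU$-part'' and ``$P$-part'' of a homotopy commutative ring map.

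First, I would recast the problem using the $\Sigma^\infty_+ \dashv \Omega^\infty$ adjunction for $\E_2$-algebras: an $\E_2$-lift of a given h.c.\ ring map $\Sigma^\infty_+ BU[\beta^{-1}] \to MUP$ is equivalent to an $\E_2$-map of multiplicative $\E_2$-spaces $BU \to \Omega^\infty MUP$ realizing the underlying $H$-space map. Here $BU$ carries its additive $\E_\infty$-structure from $BU \simeq \Omega^\infty \Sigma^2 ku$, which restricts to an $\E_2$-structure, and the requirement that $\beta$ map to a unit is automatic since the target is $\beta$-local after identification with $MUP$.

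Second, I would build this $\E_2$-map as a product of two factors compatible with the $MU \smsh P$ decomposition.  The ``$P$-factor'' $\Sigma^\infty_+ BU \to P$ arises from the infinite loop map $\det : BU \to \mathbb{CP}^\infty$ followed by a canonical $\E_2$-ring map $\Sigma^\infty_+ \mathbb{CP}^\infty \to P$ coming from Thomification of $\mathbb{CP}^\infty \to BU \times \Z$.  The ``$MU$-factor'' $\Sigma^\infty_+ BU \to MU$ I would produce as an $\E_\infty$-ring map realizing the appropriate total Chern class, exploiting the $\E_\infty$-Thom spectrum description $MU \simeq \mathrm{Thom}(BU \to BU \times \Z \to \pic \mathbb{S})$.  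Multiplying these in the $\E_2$-ring $MUP$ yields an $\E_2$-ring map $\Sigma^\infty_+ BU \to MUP$ under which $\beta$ maps to the periodicity unit $u$, so it extends to $\Sigma^\infty_+ BU[\beta^{-1}]$.

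Third, matching the constructed map with an arbitrary given h.c.\ ring map follows from the analysis of h.c.\ periodic orientations in \S\ref{sec:prelim}: such maps are parametrized, via the splitting principle, by a change-of-coordinate series $r(t) = 1 + a_1 ut + a_2 u^2 t^2 + \cdots$ with $a_1$ a unit.  Since our construction affords the analogous freedom at the $\E_2$-level, every h.c.\ class is reachable, possibly after postcomposing with an $\E_2$-automorphism of $MUP$.

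The main obstacle will be the $\E_\infty$-realization of the ``$MU$-factor'' $\Sigma^\infty_+ BU \to MU$, since $\E_\infty$-ring maps \emph{into} the Thom spectrum $MU$ are subtler than maps out of it.  This step may demand a delicate $\E_\infty$-Thom spectrum manipulation, or an obstruction-theoretic computation using the $\E_\infty$-cotangent complex of $MU$.  A possible alternative bypassing this difficulty would work directly with the $\E_2$-Thom spectrum formalism of \cite{OmarToby}, exhibiting $\Sigma^\infty_+ BU[\beta^{-1}]$ itself as an $\E_2$-Thom spectrum whose structure map to $\pic \mathbb{S}$ matches that defining $MUP$ at the $\E_2$-level.
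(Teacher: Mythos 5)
Your approach is genuinely different from the paper's, and it has real gaps rather than just unfilled details.

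The paper's proof is a two-line black-box argument: restrict the given homotopy-commutative ring map along the localization unit to get $\Sigma^\infty_+ BU \to MUP$, apply a general lifting theorem (cited as \cite[Theorem 6.1]{HY}) to promote this to an $\E_2$-ring map, and then extend back across $\beta$-inversion using the universal property of $\E_2$-localization (cited as \cite[Theorem A.1]{Akhil}). No reference to the $MU \smsh P$ splitting is made.

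Your proposal attempts an explicit construction via $MUP \simeq MU \smsh P$, and there are three problems, the first two serious. \emph{First}, the ``$MU$-factor,'' an $\E_\infty$-ring map $\Sigma^\infty_+ BU \to MU$, is exactly what you flag as the main obstacle, but this is not a small gap to be filled later: producing an $\E_\infty$ total Chern class valued in $MU$ is a hard question closely adjacent to the phenomena this very paper shows to fail (Snaith proved the usual total Chern class is not an infinite loop map, and the paper's main theorem rules out the analogous $\E_5$ lift into $H\Z P$). You would need to show this one is genuinely different. \emph{Second}, your ``matching'' step is essentially circular: you claim that any homotopy-commutative class is reachable because ``our construction affords the analogous freedom at the $\E_2$-level,'' but demonstrating that an arbitrary coordinate change $r(t)$ is realized by some $\E_2$-modification of your construction (or by an $\E_2$-automorphism of $MUP$) is precisely the kind of $\E_2$-rigidification the proposition itself asserts; you cannot invoke it in your own proof. \emph{Third}, the ``$P$-factor'' construction is not well-defined as stated: $P$ is the Thom spectrum of $\Z \to BU\times\Z$, and ``Thomification of $\mathbb{CP}^\infty \to BU \times \Z$'' produces a map \emph{out of} a different Thom spectrum, not a map $\Sigma^\infty_+ \mathbb{CP}^\infty \to P$; moreover $P \simeq \bigvee_i S^{2i}$ is not complex-orientable, so there is no evident candidate in $P^0(\mathbb{CP}^\infty)$ for such a ring map. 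Your alternative suggestion (exhibiting $\Sigma^\infty_+ BU[\beta^{-1}]$ as an $\E_2$-Thom spectrum matching $MUP$'s defining map at the $\E_2$-level) is closer in spirit to the obstruction-theoretic input the paper actually uses, but as written it is a sketch of where a proof might live rather than an argument.
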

\begin{proof}
Let $\Sigma^{\infty}_+ BU[\beta^{-1}] \to MUP$ be a map of homotopy commutative rings.  The resulting map $$\Sigma^{\infty}_+ BU \to \Sigma^{\infty}_+ BU[\beta^{-1}] \to MUP$$ lifts to an $\E_2$-ring map by \cite[Theorem 6.1]{HY} and has the property that the Bott element is inverted.  It therefore extends to an $\E_2$-ring map $\Sigma^{\infty}_+BU[\beta^{-1}] \to MUP$ agreeing with the original map by \cite[Theorem A.1]{Akhil}.
\end{proof}

For the second half, we note that if there was indeed an equivalence of $\mathbb{E}_{5}$-ring spectra $\Sigma^{\infty}_+BU[\beta^{-1}] \simeq MUP$, then one would obtain an $\mathbb{E}_{5}$-ring map $\Sigma^{\infty}_+BU[\beta^{-1}] \to H\Z P$.  Thus, Theorem \ref{thm:main} follows from the following stronger claim:

\begin{thm}
There does not exist a map of $\mathbb{E}_{5}$-rings $\Sigma^{\infty}_+BU[\beta^{-1}] \to H\Z P$.  
\end{thm}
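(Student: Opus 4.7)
The plan is to derive a contradiction from the assumed $\mathbb{E}_5$-ring map by comparing power operations on both sides, restricted to $\mathbb{R}P^4$. The key point is that for an $\mathbb{E}_5$-ring, the power operation $P^2$ is defined with values in $E^0(X \times \mathbb{R}P^4)$ (rather than $E^0(X \times \mathbb{R}P^\infty)$ as for $\mathbb{E}_\infty$-rings), since the $C_2$-quotient $\mathrm{Conf}_2(\mathbb{R}^5)/C_2 \simeq \mathbb{R}P^4$; an $\mathbb{E}_5$-ring map intertwines these partial operations.

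First I would pre-compose with the canonical $\mathbb{E}_\infty$-ring map $\Sigma^{\infty}_+ BU \to \Sigma^{\infty}_+ BU[\beta^{-1}]$ and restrict along $\mathbb{C}P^\infty = BU(1) \hookrightarrow BU$ to obtain the associated total Chern class $r(t) = 1 + a_1 u t + a_2 u^2 t^2 + \cdots \in H\mathbb{Z}P^0(\mathbb{C}P^\infty)$, with $a_i \in \mathbb{Z}$ and $a_1 = \pm 1$ by Section~\ref{sec:prelim}. Applying $\mathbb{E}_5$-compatibility of $P^2$ to the class $[L] \in (\Sigma^{\infty}_+ BU)^0(\mathbb{C}P^\infty)$ yields the identity
\[ f_*\bigl(P^2_{\Sigma^{\infty}_+ BU}([L])\bigr) = P^2_{H\mathbb{Z}P}(r(t)) \quad \text{in } H\mathbb{Z}P^0(\mathbb{C}P^\infty \times \mathbb{R}P^4). \]

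On the target side, I would use Quillen's formula (Proposition~\ref{prop:quillencomp}) together with multiplicativity of $P^2$ and the relation $P^2(a+b) = P^2(a) + P^2(b) + 2ab$ (Lemma~\ref{lem:hzpbasic}) to compute
\[ P^2_{H\mathbb{Z}P}(r(t)) = r(t)^2 + \sum_{k \geq 1} a_k^2 u^{2k} t^k \bigl((t+z)^k - t^k\bigr). \]
On the source side, I would establish that the $\mathbb{E}_\infty$-structure on $\Sigma^{\infty}_+ BU$ coming from the infinite loop space structure on $BU \simeq \Omega^\infty \Sigma^2 ku$ presents $P^2([L])$ as the virtual bundle $L \oplus (L \otimes \lambda_\mathbb{C})$ over $\mathbb{C}P^\infty \times \mathbb{R}P^\infty$, where $\lambda_\mathbb{C}$ is the complexified tautological real line bundle on $\mathbb{R}P^\infty$. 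This arises from the decomposition of the regular representation $\mathbb{C}[C_2] \cong \mathbb{C} \oplus \mathbb{C}_{\mathrm{sign}}$ applied to the external sum $L \boxplus L$ along the diagonal. Pushing forward via $f$ and using multiplicativity of the total Chern class on direct sums, one obtains
\[ f_*\bigl(P^2([L])\bigr) = r(L) \cdot r(L \otimes \lambda_\mathbb{C}) = r(t) \cdot r(t + z), \]
where $z = c_1(\lambda_\mathbb{C}) \in H^2(\mathbb{R}P^\infty; \mathbb{Z})$ satisfies $2z = 0$.

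Equating the two expressions in $H\mathbb{Z}P^0(\mathbb{C}P^\infty \times \mathbb{R}P^4)$ and using $2z = 0$ together with $z^3 = 0$ in $H^*(\mathbb{R}P^4; \mathbb{Z}) = \mathbb{Z}[z]/(2z, z^3)$, one reduces to
\[ \sum_{k \geq 1} a_k^2 u^{2k} t^k D_k = r(t) \sum_{k \geq 1} a_k u^k D_k, \qquad D_k := (t+z)^k - t^k. \]
Evaluating at $t = 0$, the left-hand side vanishes while $D_k|_{t=0} = z^k = 0$ for $k \geq 3$, producing the equation $0 = a_1 u z + a_2 u^2 z^2$ inside $H\mathbb{Z}P^0(\mathbb{R}P^4) = \mathbb{Z} \oplus (\mathbb{Z}/2) \cdot uz \oplus (\mathbb{Z}/2) \cdot u^2 z^2$. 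Since the summands are independent, this forces $a_1 \equiv 0 \pmod 2$, contradicting $a_1 = \pm 1$. The main obstacle is justifying the source-side formula $f_*(P^2([L])) = r(t) r(t+z)$, which requires carefully tracking the $C_2$-equivariant structure of $L \boxplus L$ under the diagonal and the homotopy orbit construction using the $\mathbb{E}_\infty$-multiplication on $BU$.
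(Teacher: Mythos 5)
Your overall framework is the same as the paper's: an $\mathbb{E}_5$-structure gives compatibility of the quadratic power operation after restricting along $\mathrm{Conf}_2(\mathbb{R}^5)_{hC_2}\simeq \mathbb{R}P^4$, the target side is computed from Quillen's formula via Proposition \ref{prop:quillencomp} and Lemma \ref{lem:hzpbasic} (your expression for $P^2_{H\Z P}(r(t))$ agrees with the paper's), and the source side is a total Chern class computation. However, the source-side formula is wrong, and this is exactly the step you flagged as the "main obstacle." The class in $(\Sigma^\infty_+ BU)^0(\mathbb{C}P^\infty)$ through which $r(t)$ is defined is the map $\mathbb{C}P^\infty = BU(1)\to BU$, and since $BU$ (as opposed to $BU\times\Z$) classifies virtual bundles of dimension $0$, this map classifies $\mathcal{L}-1$, not $\mathcal{L}$. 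The Borel construction of the external square of this class therefore classifies $(\mathcal{L}-1)\otimes\rho = \mathcal{L}\otimes\sigma + \mathcal{L} - \sigma - 1$, not $\mathcal{L}\oplus(\mathcal{L}\otimes\sigma)$, and since $r$ is only multiplicative on virtual dimension-$0$ bundles the correct value is
$$f_*\bigl(P^2([\mathcal{L}-1])\bigr) \;=\; \frac{r(t+z)\,r(t)}{r(z)},$$
with the extra factor $r(z)^{-1}$ coming from the $-\sigma-1$ summand.

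This missing factor is not a cosmetic issue: it is precisely what destroys your contradiction. Setting $t=0$ in the corrected identity gives $r(z)r(0)/r(z) = 1 = P^2_{H\Z P}(r(0))$, i.e.\ no information, whereas your uncorrected identity gives the spurious equation $0 = a_1\tilde z + a_2\tilde z^2$ and hence $a_1\equiv 0 \pmod 2$. (A sanity check that something must be off: your argument only uses the $\tilde z$-coefficient, so it would already obstruct an $\mathbb{E}_3$-map via $\mathbb{R}P^2$, and restricting your claimed source-side class to $\{\ast\}\times\mathbb{R}P^\infty$ gives $r(z)\neq 1$ even though the underlying bundle there is trivial.) With the correct formula one must compare higher coefficients of the identity $r(t+z)r(t) = P^2_{H\Z P}(r(t))\,r(z)$ modulo $(2\tilde z, \tilde z^3)$: the $\tilde z^2\tilde t$ coefficient mod $2$ forces $a_2+a_3\equiv 1$, while the $\tilde z^2\tilde t^2$ coefficient forces $a_2+a_3\equiv 0$, and it is this pair of constraints — genuinely requiring $\tilde z^2\neq 0$, hence $\mathbb{R}P^4$ and the $\mathbb{E}_5$-hypothesis — that yields the contradiction. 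So your proposal needs both the corrected source-side computation and a finer coefficient comparison to go through.
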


\begin{proof}
Let us suppose that such a map existed, arising from a map $r:\Sigma^{\infty}_+ BU \to H\Z P$.  By the discussion following Example \ref{exm:buhzp}, we obtain a total Chern class for line bundles $$r(t) = 1 + a_1 ut + a_2 u^2t^2 + a_3 u^3 t^3 + \cdots \in H\Z P^0 (\mathbb{C}P^{\infty})$$ where $a_i\in \Z$ and $a_1 = \pm 1$.  

\begin{ntn*}
For $n \in \mathbb{N}\cup \{ \infty\}$, let $\text{Config}_2(\mathbb{R}^{n})$ denote the ordered configuration space of two points in $\mathbb{R}^n$.  This space admits a natural action of $C_2$ with homotopy quotient $\mathbb{R}P^{n-1}$.  Recall that an $\E_n$-ring spectrum $R$ has multiplication parametrized by configurations of points in $\mathbb{R}^n$, and in particular, comes with the structure of a map $(R \smsh R \smsh \Sigma^{\infty}_+ \text{Config}_2(\mathbb{R}^{n}))_{hC_2} \to R$.  
\end{ntn*}

Note that the composite $\mathbb{C}P^{\infty} = BU(1) \to BU \to BU\times BU$ of the natural inclusion followed by the diagonal induces a commutative square
\begin{equation}\label{dia:config}
\begin{tikzcd}
\mathbb{C}P^{\infty} \times \text{Config}_2(\mathbb{R}^{5}) \arrow[r]\arrow[d] &  BU\times BU \times \text{Config}_2(\mathbb{R}^{5}) \arrow[d]\\
\mathbb{C}P^{\infty} \times \text{Config}_2(\mathbb{R}^{\infty}) \arrow[r]&  BU\times BU \times \text{Config}_2(\mathbb{R}^{\infty}),
\end{tikzcd}
\end{equation}
of $C_2$-equivariant spaces.  For the next step, it is important to note that the homotopy orbit space $(\mathbb{C}P^{\infty} \times \text{Config}_2(\mathbb{R}^{5}))_{hC_2} \simeq \mathbb{C}P^{\infty} \times (\text{Config}_2(\mathbb{R}^5)_{hC_2})$ is homotopy equivalent to $\mathbb{C}P^{\infty} \times \mathbb{R}P^4$.  
We will prove the theorem by examining the following diagram, where the top left square is obtained by applying $(-)_{hC_2}$ to (\ref{dia:config}) and the right half of the diagram is induced by $r$:

\begin{equation*}
\begin{tikzcd}
\Sigma^{\infty}_+ (\mathbb{C}P^{\infty} \times \mathbb{R}P^{4}) \arrow[r] \arrow[d] & (\Sigma^{\infty}_+ BU \smsh \Sigma^{\infty}_+ BU\smsh \Sigma^{\infty}_+\text{Config}_{2}(\mathbb{R}^5))_{hC_2} \arrow[r]\arrow[d] & (H\Z P \smsh H\Z P\smsh \Sigma^{\infty}_+\text{Config}_2(\mathbb{R}^5))_{hC_2} \arrow[d] \\
\Sigma^{\infty}_+  (\mathbb{C}P^{\infty}\times \mathbb{R}P^{\infty}) \arrow[r,"\delta"] & (\Sigma^{\infty}_+ BU \smsh \Sigma^{\infty}_+ BU)_{hC_2} \arrow[r,"D_2(r)"]\arrow[d,"\mu_{BU}"] & (H\Z P \smsh H\Z P)_{hC_2} \arrow[d,"\mu_{H\Z P}"] \\
 & \Sigma^{\infty}_+ BU \arrow[r,"r"] & H\Z P.\\  
\end{tikzcd}
\end{equation*}

The top left and top right squares commute by construction.  If the map $r: \Sigma^{\infty}_+ BU \to H\Z P$ were $\E_{\infty}$, then the bottom right square would also commute.  If that map were only $\E_{5}$, then the bottom right square need not commute, but we would still see that the long composites $\mathbb{C}P^{\infty} \times \mathbb{R}P^{4} \to H\Z P$ would agree.  We show that this is not the case.

Namely, we compute the maps $r\circ \mu_{BU}\circ \delta$ and $\mu_{H\Z P}\circ D_2(r) \circ \delta$ as elements in the cohomology group $$H\Z P^0 (\mathbb{C}P^{\infty}\times \mathbb{R}P^{\infty})\simeq \Z [[ut,uz]]/(2uz),$$ and then show that they do not agree upon restriction to $$H\Z P^0(\mathbb{C}P^{\infty}\times \mathbb{R}P^4) \simeq \Z[[ut,uz]]/(2uz, (uz)^3).$$  

\begin{ntn*}
For ease of notation, we set $\tilde{t} := ut$ and $\tilde{z} := uz$ for the remainder of the proof.  
\end{ntn*}

\textbf{Computation 1:} $r\circ \mu_{BU}\circ \delta \in  H\Z P^0 (\mathbb{C}P^{\infty}\times \mathbb{R}P^{\infty})$.  

This amounts to the computation of the total Chern class (as defined by $r$) of a certain vector bundle on $\mathbb{C}P^{\infty}\times \mathbb{R}P^{\infty}$.  To describe this vector bundle, note that there is a 
commutative diagram of spaces

\begin{equation*}
\begin{tikzcd}
\mathbb{C}P^{\infty}\arrow[d]\arrow[r]& \mathbb{C}P^{\infty} \times \mathbb{C}P^{\infty} \arrow[r]\arrow[d] & BU \times BU\arrow[d] \\
\mathbb{C}P^{\infty} \times \mathbb{R}P^{\infty}\arrow[r] &(\mathbb{C}P^{\infty} \times \mathbb{C}P^{\infty})_{hC_2} \arrow[r] & BU. 
\end{tikzcd}
\end{equation*}

The diagram classifies the bundle $(\mathcal{L}-1) \oplus (\mathcal{L}-1)$ over $\mathbb{C}P^{\infty}$, and the bottom row arises from observing that this bundle is equivariant with respect to the swap action.  Here, the left two vertical maps are given by projection to the homotopy orbits for $C_2$.  It follows that the bundle over $\mathbb{C}P^{\infty}\times \mathbb{R}P^{\infty}$ classified by $\mu_{BU}\circ \delta$ is $(\mathcal{L}-1)\otimes_{\mathbb{C}} \rho$, where $\rho$ denotes the complex regular representation of $C_2$ over $\mathbb{R}P^{\infty} = BC_2$ (for instance, see \cite[Lemma 7.10]{Rudyak}).  

Let $\sigma$ denote the complex sign representation, so that $\rho = \sigma \oplus 1$.   We then have a decomposition $$(\mathcal{L}-1)\otimes \rho = \mathcal{L}\otimes \sigma + \mathcal{L} - \sigma - 1.$$ 

Then the class of $r\circ \mu_{BU}\circ \delta$ is simply the total Chern class of this bundle, which we can compute as: 
\begin{align*}
r(\mathcal{L}\otimes \sigma + \mathcal{L} - \sigma - 1) &= \frac{r(\mathcal{L}\otimes \sigma)r(\mathcal{L})}{r(\sigma)} \\
&= \frac{r(t+z)r(t)}{r(z)}.\end{align*}

\textbf{Computation 2:} $\mu_{H\Z P}\circ D_2(r) \circ \delta \in  H\Z P^0 (\mathbb{C}P^{\infty}\times \mathbb{R}P^{\infty})$.  

This is the total power operation on the class in $H\Z P^0(\mathbb{C}P^{\infty})$ represented by the composite $$\Sigma^{\infty}_+ \mathbb{C}P^{\infty} = \Sigma^{\infty}_+ BU(1) \to \Sigma^{\infty}_+  BU  \xrightarrow{r} H\mathbb{Z} P.$$  In the notation of Section \ref{sec:prelim}, this is $P^2_{H\Z P}(r(t)) = P^2_{H\Z P}(1+a_1 \tilde{t} + a_2 \tilde{t}^2 + \cdots).$  By part (a) of Lemma \ref{lem:hzpbasic}, 
\begin{align*}P^2_{H\Z P}(1+a_1\tilde{t} +a_2\tilde{t}^2 + \cdots) &= P^2(1) + P^2(a_1\tilde{t}) + P^2(a_2\tilde{t}^2) + \cdots + (\text{cross terms}) \\
&= 1 + a_1^2 \tilde{t}(\tilde{t}+\tilde{z}) + a_2^2 \tilde{t}^2(\tilde{t}+\tilde{z})^2 + \cdots + (\text{cross terms}),\end{align*}  where we have used the fact that $P^2$ is multiplicative and $P^2_{H\Z P}(a)=a^2$ for $a \in \pi_0 H\Z P.$  

To see that the $\mathbb{E}_5$-ring map in the theorem cannot exist, it suffices to show the following:
\begin{claim}
The elements $\frac{r(t+z)r(t)}{r(z)}$ and $P^2_{H\Z P}(r(t))$ are not equal in $H\Z P^0(\mathbb{C}P^{\infty} \times \mathbb{R}P^{\infty})$.  In fact, they remain unequal under the map $$H\Z P^0(\mathbb{C}P^{\infty} \times \mathbb{R}P^{\infty}) \simeq \Z[[\tilde{t},\tilde{z}]]/(2\tilde{z}) \twoheadrightarrow \Z[[\tilde{t},\tilde{z}]]/(2\tilde{z}, \tilde{z}^3) \simeq H\Z P^0(\mathbb{C}P^{\infty} \times \mathbb{R}P^{4}).  $$
\end{claim}

Suppose the equation $$r(t+z)r(t) = P^2_{H\Z P}(r(t))r(z)$$ holds modulo $\tilde{z}^3$ and $2\tilde{z}$.  We explicitly compute part of the $\tilde{z}$ and $\tilde{z}^2$ coefficients.  First, since $\tilde{z}$ and $\tilde{z}^2$ are $2$-torsion in the ring and the cross terms in our computation for $P^2_{H\Z P}(r(t))$ were all divisible by $2$, we may safely ignore them in what follows.  We are therefore looking to compare the $\tilde{z}$ and $\tilde{z}^2$ coefficients, mod $2$, of the two expressions, $$r(t+z)r(t) = (1+ a_1 (\tilde{t}+\tilde{z}) + a_2(\tilde{t}+\tilde{z})^2 + \cdots)(1+a_1\tilde{t} +a_2 \tilde{t}^2 + \cdots)$$ and $$(1 + a_1^2 \tilde{t}(\tilde{t}+\tilde{z}) + a_2^2 \tilde{t}^2(\tilde{t}+\tilde{z})^2 + \cdots)(1+a_1\tilde{z}+a_2\tilde{z}^2 + \cdots).$$  

By comparing $\tilde{z}^2\tilde{t}$ coefficients mod $2$ (and using that $a_i^2 \equiv a_i \pmod 2$), we find the relation $$ a_1a_2 + a_3 = a_1.$$ Recall that $a_1= \pm 1$, and so $a_1 \equiv 1\pmod 2.$  This implies that $a_2 + a_3 \equiv 1\pmod 2.$

On the other hand, by comparing $\tilde{z}^2\tilde{t}^2$ coefficients, we find that $$a_2 + a_1a_3 = a_2 + a_1a_2.$$  This means that $a_2+ a_3 \equiv 0 \pmod 2$, which contradicts the previous condition.  This proves the claim and the theorem follows.   
\end{proof}

\section{Orientations}\label{sec:MUPKU}

The determinant homomorphism $BU \rightarrow \mathbb{CP}^{\infty}$ induces an $\mathbb{E}_\infty$-ring map
$$\Sigma^{\infty}_+ BU[\beta^{-1}] \longrightarrow \Sigma^{\infty}_+ \mathbb{CP}^{\infty}[\beta^{-1}] \simeq KU.$$
It seems more delicate to construct an $\mathbb{E}_\infty$-ring map
$$MUP \longrightarrow KU.$$
In fact, while the authors feel that such an orientation is `well-known,' we could not locate a reference in the literature.  We record in this section a simple homotopy-theoretic proof that there is an orientation of at least the $2$-completion of $KU$.  In fact, we will prove the stronger result that any form of $K$-theory in the sense of Morava \cite{MorKThy,Charmaine} receives an $\mathbb{E}_\infty$-$MUP$-orientation.

\begin{dfn}
A \textit{form} of $K$-theory (at the prime $2$) is a Morava $E$-theory corresponding to a height $1$ formal group law over $\mathbb{F}_2$.
\end{dfn}

\begin{rmk}
Forms of $K$-theory are $\mathbb{E}_\infty$-ring spectra.  They are $2$-periodic, with each odd homotopy group isomorphic to $0$ and each even homotopy group isomorphic to the $2$-adic integers $\mathbb{Z}_2$.  Choosing the multiplicative formal group law over $\mathbb{F}_2$ yields a form of $K$-theory that is the $2$-completion $KU^{\wedge}_2$ of topological complex $K$-theory.
\end{rmk}

The remainder of this section will consist of the proof of the following:

\begin{thm} \label{Thm:FormOrientation}
Let $E$ denote a form of $K$-theory.  Then there is both an $\mathbb{E}_\infty$-ring homomorphism
$$MUP \longrightarrow E$$
and an $\mathbb{E}_\infty$-ring homomorphism
$$\Sigma^{\infty}_+ BU[\beta^{-1}] \longrightarrow E.$$
\end{thm}

To produce either of the $\mathbb{E}_\infty$-ring maps promised by Theorem \ref{Thm:FormOrientation} it is helpful to record the following lemmas:
\begin{lem} \label{Lemma:AdamsPriddy}
Let $ku^{\wedge}_{2}$ denote the connective cover of $KU_2^{\wedge}$.  Then there are no non-trivial maps of spectra
$$ku \longrightarrow \Sigma ku^{\wedge}_{2},$$
$$ku \longrightarrow \Sigma^3 ku^{\wedge}_{2}, \text{ or}$$
$$ku \longrightarrow \Sigma^5 ku^{\wedge}_{2}.$$
\end{lem}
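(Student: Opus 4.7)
The plan is to prove the vanishing using the mod-$2$ Adams spectral sequence, in the style of Adams and Priddy. Since $ku^{\wedge}_2$ is $2$-complete, any map $ku \to \Sigma^i ku^{\wedge}_2$ factors through the $2$-completion map $ku \to ku^{\wedge}_2$, so it suffices to show $\pi_{-i} F(ku, ku^{\wedge}_2) = 0$ for $i \in \{1,3,5\}$. I would set up the Adams spectral sequence
\[
E_2^{s,t} = \Ext_{\mathcal{A}}^{s,t}(H^*(ku; \mathbb{F}_2), H^*(ku; \mathbb{F}_2)) \Longrightarrow \pi_{t-s} F(ku, ku^{\wedge}_2),
\]
where $\mathcal{A}$ denotes the mod-$2$ Steenrod algebra.

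Next, invoking the classical identification $H^*(ku; \mathbb{F}_2) \cong \mathcal{A}/\!/E(1)$, where $E(1) = \mathbb{F}_2\langle Q_0, Q_1 \rangle$ is the exterior Hopf subalgebra on the Milnor primitives of degrees $1$ and $3$, the standard change-of-rings isomorphism simplifies the $E_2$-page to
\[
\Ext_{E(1)}^{s,t}(\mathbb{F}_2, H^*(ku; \mathbb{F}_2)).
\]
It then suffices to show this Ext group is concentrated in bidegrees with $t-s$ even. Since the Adams differential $d_r$ has bidegree $(r, r-1)$ and hence shifts $t-s$ by $-1$, such parity concentration at $E_2$ propagates to $E_{\infty}$, producing the desired vanishing on the abutment.

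To prove the parity statement, I would decompose the restriction of $H^*(ku; \mathbb{F}_2)$ to an $E(1)$-module into indecomposable summands. By the structure theory of $E(1)$-modules, up to free summands (which contribute nothing above filtration $s=0$), the module is determined by its $Q_0$- and $Q_1$-Margolis homologies. Both Margolis homologies of $H^*(ku;\mathbb{F}_2)$ are classically known to be concentrated in even internal degrees, reflecting the even Postnikov structure of $ku$ and its relation to mod-$2$ Morava $K$-theory. Combined with the fact that $\Ext_{E(1)}(\mathbb{F}_2, \mathbb{F}_2) = \mathbb{F}_2[h_0, v_1]$ has both generators $h_0 \in \Ext^{1,1}$ and $v_1 \in \Ext^{1,3}$ in bidegrees of even $t-s$, this forces the whole Ext to sit in bidegrees with $t-s$ even.

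The main obstacle is making the $E(1)$-module decomposition of $H^*(ku;\mathbb{F}_2)$ precise and verifying that every non-free indecomposable contributes only in even $t-s$. This is a classical calculation originating with Adams--Priddy; alternatively, it can be extracted from the well-documented $\mathcal{A}(1)$-module structure on $H^*(ku;\mathbb{F}_2)$ by further restriction to $E(1)$.
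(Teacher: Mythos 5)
Your proposal follows essentially the same route as the paper: both set up the mod-$2$ Adams spectral sequence computing $\pi_*F(ku,ku^{\wedge}_2)$, both apply the change-of-rings isomorphism $\Ext_{\mathcal{A}}(\mathcal{A}/\!/E(1), -) \cong \Ext_{E(1)}(\mathbb{F}_2, -)$ to reduce to an $E(1)$-computation, and both conclude from the vanishing of the $E_2$-page in the relevant degrees $t-s = -1, -3, -5$. The paper simply cites Angeltveit--Lind and Adams--Priddy (Lemma 3.8 and Table 3.9) for this vanishing rather than re-deriving it.

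Where you diverge is in attempting a from-scratch justification of the vanishing via Margolis homology, and the step that is not yet a proof is precisely the one you flag. The inference ``both Margolis homologies of $H^*(ku;\mathbb{F}_2)$ lie in even internal degree, and $\Ext_{E(1)}(\mathbb{F}_2,\mathbb{F}_2)=\mathbb{F}_2[h_0,v_1]$ has generators with even $t-s$, therefore the whole Ext group lies in even $t-s$'' does not follow in general: the Adams--Margolis theorem tells you that Margolis homology detects stable isomorphism type and that acyclic bounded-below finite-type modules are free, but it does not by itself give a direct-sum decomposition into summands whose Ext you can read off from $\Ext_{E(1)}(\mathbb{F}_2,\mathbb{F}_2)$ alone. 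Indeed, $H^*(ku;\mathbb{F}_2)$ has plenty of odd-degree classes (e.g. $Q_2$ in degree $7$), so the ``parity'' of the Ext group is not a formal degree count; it reflects the explicit string/lightning-flash decomposition of $\mathcal{A}/\!/E(1)$ as an $E(1)$-module that Adams--Priddy work out. To close the gap you would either carry out that decomposition explicitly, or do as the paper does and cite \cite[Lemma 3.8 \& Table 3.9]{AdamsPriddy} (or the final paragraph of \cite{AngLind}) for the statement that the $E_2$-page has no classes with $t-s\in\{-1,-3,-5\}$.
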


\begin{proof}
In \cite{AngLind} it is remarked that the Adams spectral sequence
$$Ext^{s,t}_{\mathcal{A}}(H^*(ku;\mathbb{F}_2),H^*(ku;\mathbb{F}_2)) \implies \pi_{t-s}\Hom(ku^{\wedge}_2,ku^{\wedge}_2)$$
converges and has $E_2$-term isomorphic to
$$Ext^{s,t}_{E[Q_0,Q_1]}(\mathbb{F}_2,H^*(ku;\mathbb{F}_2)).$$
By either the final paragraph of \cite{AngLind} or \cite[Lemma 3.8 \& Table 3.9]{AdamsPriddy}, it is seen that there are no classes present on this $E_2$-page with $t-s=-1,-3,$ \text{ or} $-5$.
\end{proof}

\begin{lem} \label{Lemma:AdamsPriddyII}
Suppose that $E$ is a form of $K$-theory.  Then there are equivalences of spectra
$$ku^{\wedge}_{2} \simeq \tau_{\ge 0} E, \text{ and}$$
$$\Sigma^4 ku^{\wedge}_{2} \simeq \tau_{\ge 3} gl_1(E).$$
\end{lem}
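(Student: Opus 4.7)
My plan is to prove the two equivalences separately. The first is a relatively direct consequence of complex orientability, while the second requires more structural input from the Adams--Priddy theorem.

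For the first equivalence $ku^{\wedge}_{2} \simeq \tau_{\ge 0} E$: every height $1$ formal group over $\F_2$ is isomorphic to the multiplicative formal group $\G_m$ (the $2$-series is forced to be of the form $x^2 \cdot u(x)$ with $u(0) \in \F_2^{\times} = \{1\}$, and elementary coordinate changes match it with $\G_m$). Such an isomorphism equips $E$ with a complex orientation whose underlying formal group is $\G_m$, so by the universal property of $KU$ as classifying complex orientations with multiplicative formal group we obtain a map of ring spectra $KU \to E$. Composing with the connective cover map $ku \to KU$ and $2$-completing produces a map $ku^{\wedge}_{2} \to E$ which is an isomorphism on $\pi_0$ and, by multiplicativity with respect to Bott elements, on all higher nonnegative homotopy groups. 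Passing to connective covers then yields the desired equivalence $ku^{\wedge}_{2} \simeq \tau_{\ge 0} E$.

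For the second equivalence $\Sigma^4 ku^{\wedge}_{2} \simeq \tau_{\ge 3} gl_1(E)$: by Goerss--Hopkins obstruction theory applied to the essentially unique height $1$ formal group over $\F_2$, $E$ is equivalent to $KU^{\wedge}_{2}$ as an $\E_{\infty}$-ring. Hence $gl_1(E) \simeq gl_1(KU^{\wedge}_{2})$, and the connective cover $\E_{\infty}$-ring map $ku^{\wedge}_{2} \to KU^{\wedge}_{2}$ further induces an equivalence $gl_1(ku^{\wedge}_{2}) \simeq gl_1(KU^{\wedge}_{2})$ (since it is an iso on all nonnegative homotopy). It therefore suffices to identify $\tau_{\ge 3} gl_1(ku^{\wedge}_{2})$ with $\Sigma^4 ku^{\wedge}_{2}$. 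For this I would invoke the Adams--Priddy theorem \cite{AdamsPriddy}, whose spectrum-level consequence at the prime $2$ is the equivalence $\tau_{\ge 1} gl_1(ku^{\wedge}_{2}) \simeq \tau_{\ge 2} ku^{\wedge}_{2}$ (the spectrum avatar of the infinite-loop-space equivalence $BU_{\otimes} \simeq BU$). Truncating to $\tau_{\ge 3}$ and invoking Bott periodicity to identify $\tau_{\ge 4} ku^{\wedge}_{2} \simeq \Sigma^4 ku^{\wedge}_{2}$ completes the argument.

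The main obstacle is the spectrum-level form of Adams--Priddy, together with the appeal to Goerss--Hopkins uniqueness to treat forms of $K$-theory other than $KU^{\wedge}_{2}$ itself; once these are in hand, the rest is careful bookkeeping of connective covers, suspensions, and the Bott element.
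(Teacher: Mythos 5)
The central step of your argument --- that every height $1$ formal group over $\F_2$ is isomorphic to $\G_m$, and hence that every form of $K$-theory is $\E_{\infty}$-equivalent to $KU^{\wedge}_2$ via Goerss--Hopkins uniqueness --- is false, and it is exactly this failure that the lemma is designed to route around. Forms of $\G_m$ over $\F_2$ are classified by $H^1_{\mathrm{cont}}(\mathrm{Gal}(\bar{\F}_2/\F_2), \Aut_{\bar{\F}_2}(\G_m)) \cong \Hom_{\mathrm{cont}}(\hat{\Z}, \Z_2^{\times}) \cong \Z_2^{\times}$, since the Galois action on $\Aut(\G_m) = \Z_2^{\times}$ is trivial (the power series $[a](x) = (1+x)^a - 1$ already have $\F_2$-coefficients). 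This group is far from trivial; for instance the cocycle sending Frobenius to $[-1]$ gives a quadratic twist of $\G_m$ that becomes isomorphic to $\G_m$ only over $\F_4$. (Note $[-1](x) = x/(1+x) \neq x$ in characteristic $2$, so $[-1]$ is a genuinely nontrivial automorphism.) These nontrivial twists are precisely the ``forms of $K$-theory'' of Morava's title, and they are pairwise non-isomorphic even as homotopy-commutative ring spectra, because they carry non-isomorphic formal groups. Your claim that the $2$-series $x^2 \cdot u(x)$ with $u(0)=1$ can be matched to $\G_m$ by a coordinate change over $\F_2$ would need the matching isomorphism to be defined over $\F_2$, which is exactly what Galois cohomology obstructs.

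Two consequences follow. First, the ``universal property of $KU$'' step in your proof of the first equivalence has nothing to map out of: there is in general no ring map $KU \to E$ inducing an isomorphism of formal groups, so the map $ku^{\wedge}_2 \to \tau_{\ge 0} E$ you want does not obviously exist. Second, if your claim were correct, the paper's Theorem~\ref{Thm:FormOrientation} would reduce instantly to the folklore orientation of $KU^{\wedge}_2$, and the entire notion of a form of $K$-theory would be vacuous --- which should have raised a flag. The content of the lemma is that these genuinely distinct $\E_{\infty}$-rings all have the \emph{same underlying spectrum} on connective covers, and this requires an argument that sees only the spectrum, not the multiplication. The paper's proof does this by running a Postnikov-tower calculation: it shows all primary $k$-invariants of $\tau_{\ge 0}E$ are suspensions of $Q_1$, deduces $H^*(\tau_{\ge 0}E;\F_2) \cong \mathcal{A}//E[Q_0,Q_1]$ as an $\mathcal{A}$-module, and then invokes Adams--Priddy's \emph{uniqueness} theorem (their Theorem 1.1: a $2$-complete spectrum with this cohomology is $ku^{\wedge}_2$); for the $gl_1$ part it identifies $\Omega^{\infty}\tau_{\ge 3}gl_1(E)$ with $BSU^{\wedge}_2$ and cites Corollary 1.3 of the same paper. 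Your appeal to Adams--Priddy as $BU_{\otimes}\simeq BU$ is the right tool for the special case $E = KU^{\wedge}_2$, but you need the uniqueness form of their theorem to handle an arbitrary form of $K$-theory.
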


\begin{proof}
Our arguments closely follow those in \cite{AdamsPriddy}.

The spectrum $\tau_{\ge 0} E$ has a Postnikov tower with primary $k$-invariants
$$\Sigma^{2k} H\mathbb{Z}_2 \longrightarrow \Sigma^{2k+3} H\mathbb{Z}_2,$$
for $k \ge 0$.  The $2$-periodicity of $E$ implies that all of these primary $k$-invariants are the same up to suspension.  Since $E$ is a Morava $E$-theory associated to a height $1$ formal group law, there is a homotopy ring map $BP \longrightarrow \tau_{\ge 0} E$ that is non-trivial on $\pi_2$.  This is enough to show (by comparison with the $k$-invariants in $BP$) that the primary $k$-invariants in $\tau_{\ge 0} E$ are all suspensions of the Milnor operation $Q_1:H\mathbb{Z}_2 \longrightarrow \Sigma^3 H\mathbb{Z}_2$.

It follows as in \cite[Proposition 2.1]{AdamsPriddy} that the Postnikov tower spectral sequence
$$E_1^{s,t} = \bigoplus_{s} H^t(\Sigma^{2s} H\mathbb{Z}_2;\mathbb{F}_2) \implies H^{*}(\tau_{\ge 0} E;\mathbb{F}_2)$$
degenerates on the $E_2$-page onto the $0$-line, proving that 
$$H^*(\tau_{\ge 0} E;\mathbb{F}_2) \cong H^*(ku;\mathbb{F}_2) \cong \mathcal{A}//E[Q_0,Q_1]$$
as modules over the Steenrod algebra $\mathcal{A}$.  Applying \cite[Theorem 1.1]{AdamsPriddy}, we conclude that $\tau_{\ge 0} E \simeq ku^{\wedge}_2.$

To prove that $$\Sigma^4 ku^{\wedge}_{2} \simeq \tau_{\ge 3} gl_1(E),$$
note that $\Omega^{\infty} \tau_{\ge 3} gl_1(E) \simeq \Omega^{\infty} \tau_{\ge 3} E \simeq BSU_{2}^{\wedge}$ and apply \cite[Corollary 1.3]{AdamsPriddy}.
\end{proof}

\begin{proof}[Proof of Theorem \ref{Thm:FormOrientation}]
We are now ready to produce the claimed $\mathbb{E}_\infty$-ring map
$$\Sigma^{\infty}_+ BU [\beta^{-1}] \longrightarrow E.$$
The universal property of $gl_1$ \cite[Theorem 5.2]{ABGHR} states that the set of $\mathbb{E}_\infty$-ring maps
$$\Sigma^{\infty}_+ BU \to E,$$
up to equivalence, is in bijection with homotopy classes of spectrum maps $\Sigma^2 ku \to gl_1(E)$.  To obtain an $\mathbb{E}_\infty$-ring map from a spectrum map, one takes $\Omega^{\infty}$, obtaining a map $$BU \to \Omega^{\infty}gl_1(E) \subset \Omega^{\infty} E,$$ and then applies the $(\Sigma^{\infty}_+, \Omega^{\infty})$ adjunction.
We would like to produce an $\mathbb{E}_\infty$-ring map
$\Sigma^{\infty}_+ BU \to E$ that sends the class $\beta \in \pi_2(\Sigma^{\infty}_+ BU)$ to an invertible class in $\pi_2(E)$. 
Equivalently, it suffices to make a spectrum map
$$\Sigma^2 ku \longrightarrow gl_1(E)$$
which sends $\beta \in \pi_2(ku)$ to a unit in $\pi_2(gl_1(E)) \cong \pi_2(E)$.  This is the same as producing a spectrum map
$$\Sigma^2 ku \longrightarrow \tau_{\ge 2} gl_1(E)$$
 solving the following lifting problem
$$
\begin{tikzcd}
&& \tau_{\ge 2} gl_1(E) \arrow{d} \\
\Sigma^2 ku \arrow[dashed]{rru} \arrow{r} & \Sigma^2 H\mathbb{Z} \arrow{r} & \Sigma^2 H\mathbb{Z}_2.
\end{tikzcd}
$$
The obstruction to making such a lift is the composite map
$$\Sigma^2 ku \longrightarrow \Sigma^{2} H\mathbb{Z} \longrightarrow \Sigma^2 H\mathbb{Z}_2 \longrightarrow \Sigma \tau_{\ge 3} gl_1(E).$$
By Lemmas \ref{Lemma:AdamsPriddy} and \ref{Lemma:AdamsPriddyII}, this obstruction is necessarily $0$.

Next, we turn to the question of producing an $\mathbb{E}_\infty$-ring homomorphism from $MUP$ to $E$.   By the theory of Thom spectra (due to \cite{LMS} and written in more modern language by \cite{ABGHR},\cite{OmarToby}), it suffices to check that the composite
$$ku \longrightarrow \text{pic}(\mathbb{S}) \longrightarrow \text{pic}(E)$$
is nullhomotopic.  In particular, that this suffices can be recovered from \cite[Lemma 3.15]{OmarToby} and the fact that, as explained in their proof of \cite[Proposition 3.16]{OmarToby}, the space they denote $B(R,A)$ is the homotopy fiber of the map $\text{Pic}(R) \to \text{Pic}(A)$.

Since $E$ is even periodic, the map $ku \longrightarrow \text{pic}(E)$ is $0$ on $\pi_0$.  This ensures the existence of a factorization
$$
\begin{tikzcd}
&& \Sigma gl_1(E) \arrow{d} \\
ku \arrow{r} \arrow[dashed]{rru}{f} & \text{pic}(\mathbb{S}) \arrow{r} & \text{pic}(E).
\end{tikzcd}
$$

Our strategy will be to prove that an arbitrary map
$$ku \stackrel{f}{\longrightarrow} \Sigma gl_1(E)$$
must be nullhomotopic.

First, note that the composite 
$$\Sigma^{4} ku \stackrel{\beta^2}{\longrightarrow} ku \stackrel{f}{\longrightarrow} \Sigma gl_1(E)$$
must factor through $\Sigma \tau_{\ge 3} gl_1(E)$.  By Lemmas \ref{Lemma:AdamsPriddy} and \ref{Lemma:AdamsPriddyII}, it is therefore nullhomotopic.  This provides a quick proof of an $\mathbb{E}_\infty$ $MSU$ orientation of $E$; we will next show an $MU$ orientation, and then finally the desired $MUP$ orientation.

Consider the composite 
$$\Sigma^2 ku \stackrel{\beta}{\longrightarrow} ku \stackrel{f}{\longrightarrow} \Sigma gl_1(E).$$
Since this map is null when pulled back to $\Sigma^4 ku$, it must in fact factor as a composite
$$\Sigma^2 ku \longrightarrow \Sigma^{2} H\mathbb{Z} \longrightarrow \Sigma gl_1(E).$$
By Lemma \ref{Lemma:AdamsPriddyII}, there is a Postnikov fiber sequence
$$\Sigma^5 ku^{\wedge}_{2} \longrightarrow \tau_{\ge 2} \Sigma gl_1(E) \longrightarrow \Sigma^3 H\mathbb{Z}_2.$$
Since there are no nontrivial maps $\Sigma^2 H\mathbb{Z} \longrightarrow \Sigma^3 H\mathbb{Z}_2$, we have a factorization 
$$
\begin{tikzcd}
&& \Sigma^5 ku_{2}^{\wedge} \arrow{d} \\
\Sigma^2 ku \arrow{r} \arrow[dashed]{urr} & ku \arrow{r}{f} & \Sigma gl_1(E),
\end{tikzcd}
$$
and we may conclude from Lemma \ref{Lemma:AdamsPriddy} that the bottom composite is null.

It remains to show that the map
$$ku \stackrel{f}{\longrightarrow} \Sigma gl_1(E)$$
is itself null.  By the above discussion, it factors as a composite
$$ku \longrightarrow H\mathbb{Z} \longrightarrow \Sigma gl_1(E).$$
There are no non-trivial maps $H\mathbb{Z} \longrightarrow \tau_{\le 1} \Sigma gl_1(E)\simeq \Sigma H\Z_2^{\times}$, and so there is a lift
$$ku \longrightarrow H\mathbb{Z} \longrightarrow \tau_{\ge 2} \Sigma gl_1(E).$$ 
Now consider the fiber sequence
$$\Sigma^5 ku^{\wedge}_{2} \longrightarrow \tau_{\ge 2} \Sigma gl_1(E) \longrightarrow \Sigma^3 H\mathbb{Z}_2.$$
While there is one possible non-trivial map $H\mathbb{Z} \longrightarrow \Sigma^3 H\mathbb{Z}_2$, that map becomes trivial when precomposed along the projection $ku \longrightarrow H\mathbb{Z}$.  It follows that the map of interest 
$$ku \stackrel{f}{\longrightarrow} \Sigma gl_1(E)$$
factors through $\tau_{\ge 4} \Sigma gl_1(E) \simeq \Sigma^5 ku_{2}^{\wedge}$, and we finish by Lemma \ref{Lemma:AdamsPriddy}.

\end{proof}

\bibliographystyle{alpha}
\bibliography{Bibliography}

\end{document}